\newtheorem{theorem}{Theorem}[section]
\newtheorem{lemma}[theorem]{Lemma}
\newtheorem{proposition}[theorem]{Proposition}
\newtheorem{corollary}[theorem]{Corollary}
\theoremstyle{definition}
\theoremstyle{remark}
\numberwithin{equation}{section}
\begin{document}

\setcounter{page}{1}

\title[Generalized Module Extension Banach Algebras]{Generalized Module Extension Banach Algebras: Derivations and Weak Amenability}

\author[M. Ramezanpour$^{1}$, \MakeLowercase{and} S. Barootkoob$^2$]{Mohammad Ramezanpour,$^{1*}$ 
\MakeLowercase{and} Seddigheh Barootkoob$^2$}

\address{$^1$School of Mathematics and Computer Science,
Damghan University, P. O. Box 36716, Damghan 41167, Iran.}
\email{\textcolor[rgb]{0.00,0.00,0.84}{ramezanpour@du.ac.ir}}

\address{$^2$ Department of Mathematics, University of Bojnord, P.O. Box 1339, Bojnord, Iran.}
\email{\textcolor[rgb]{0.00,0.00,0.84}{s.barutkub@ub.ac.ir}}



\subjclass[2010]{Primary 46H20; Secondary 46H25.}

\keywords{Banach algebra, Derivation, $n-$weak amenability.}

\date{
\indent $^{*}$Corresponding author}

\begin{abstract}
Let $A$ and $X$ be Banach algebras and let $X$ be an algebraic Banach $A-$module. 
Then the $\ell^1-$direct sum $A\times X$ equipped with the multiplication
$$(a,x)(b,y)=(ab, ay+xb+xy)\quad (a,b\in A, x,y\in X)$$
is a Banach algebra, denoted by $A\bowtie X$, which will be called 
"{\it a generalized module extension Banach algebra}".  Module extension algebras, Lau product and also the 
direct sum of Banach algebras are the main examples satisfying this framework. We characterize the 
structure of $n-$dual valued ($n\in\mathbb N)$) derivations on $A\bowtie X$ from which we investigate the $n-$weak 
amenability for the algebra $A\bowtie X.$ 
We apply the results and the techniques of proofs for presenting some older results with simple direct proofs.
\end{abstract} \maketitle

\section{Introduction and some Preliminaries}
A derivation from a Banach algebra $A$ to a
Banach $A$-module $X$ is a bounded linear mapping $D:A\rightarrow X$
such that $D(ab)=D(a)b+aD(b)$ for all $a,b\in A$. For each $x\in
X$ the mapping $d_x:a\rightarrow ax-xa$, $(a \in A)$ is a
derivation, called the inner derivation implemented by $x$. The concept of $n-$weak amenability 
was introduced and intensively studied by Dales, Ghahramani and Gr{\o}nb{\ae}k \cite{DGG}. A Banach 
algebra $A$ is said to be $n-$weakly amenable ($n\in\mathbb{N}$) if every
derivation from ${\mathcal A}$ into $ A^{(n)}$ is
inner, where $A^{(n)}$ is the $n^{th}-$dual of $A$, we also write $A^{(0)}=A$. $1-$weak amenability is the so-called weak
amenability which was first introduced and studied by Bade, Curtis
and Dales \cite{BCD} for commutative Banach algebras and then by
Johnson \cite{J} for a general Banach algebra. $C^*-$ algebras and the convolution group 
algebras are the main examples of $n-$weakly amenable Banach algebras (for a proof see \cite{DGG}, \cite{Z1}). 
For more background concerning $n-$weak amenability one may refer  to the monograph \cite{D}.



Let $A$ and $X$ be Banach algebra and let $X$ be a Banach $A-$module.
We say that $X$ is an algebraic Banach $A-$module if for every $x, y\in X$ and $a\in A,$
\begin{align*}
&a(xy)=(ax)y,\ 
(xy) a=x(ya),\ 
(xa)y=x(ay),\ \ \  \rm{and}\\ 
&\|ax\|\leq\|a\|\|x\|,\ \  \|xa\|\leq\|a\|\|x\| 
\end{align*}
\[.\]
Then a direct verification shows that the $\ell^1-$direct sum $A\bowtie X$ under the multiplication
\[(a,x)(b,y)=(ab, ay+xb+xy)\quad (a,b\in A, x,y\in X)\]
is a Banach algebra which is called the generalized module extension of $A$ by $X$ and denoted by $A\bowtie X$.

It is easy to check that $A\times\{0\}$ is a closed subalgebra while $\{0\}\times X$ is a
closed ideal of $A\bowtie X$, and that $(A\bowtie X)/(\{0\}\times X)$ is isometrically isomorphic to $A\times\{0\}$.\\

The main examples of generalized module extensions are listed as follows:
\begin{enumerate}[\hspace{1em}\rm $\bullet$]
\item (\textit{The classical module extension algebras}) The (classical) module extension algebra $A\ltimes X$, as introduced in \cite{BDL}, 
is the $\ell^1-$direct sum of $A$ by the Banach $A-$module $X$ equipped with the multiplication
\[(a,x)(b,y)=(ab, ay+xb)\quad (a,b\in A, x,y\in X).\]
Clearly, $A\ltimes X$ is a generalized module extension Banach algebra, in which $X$ is equipped with the trivial product $xy=0.$ 
Module extension Banach algebras are known as a rich source of (counter-)examples in various situations in abstract harmonic analysis and
functional analysis. Some aspects of the Banach algebra $A\ltimes X$ have been 
described in \cite{BDL}. In \cite{Z} Zhang has characterized the structures 
of derivations from $A\ltimes X$ into various duals from which he investigated 
the $n-$weak amenability properties of $A\ltimes X$.
The class of module extension Banach algebras also includes
the triangular Banach algebra whose weak amenability has been investigated in \cite{FM}.

\item (\textit{$\theta-$Lau product of Banach algebras}) Let $A$ and $B$ be Banach algebras 
and let $\theta\in\sigma(A).$ Then the $\theta-$Lau product 
$A{~}_{\theta\!\!}\times B$ which is equipped with the multiplication
\[(a,b)(c,d)=(ac, \theta(a)d+\theta(c)b+bd)\quad (a,c\in A, b,d\in B),\]
can be viewed as a generalized module extension algebra, in which $B$ endowed
with the module operations $ab=ba=\theta(a)b$ is an algebraic Banach $A-$module. 
This product was introduced by Lau \cite{L} for certain class of Banach algebras 
and followed by Sangani Monfared \cite{M} for the general case. A very 
familiar example is the case that $A=\mathbb{C}$ with $\theta=\iota$ as the identity 
character, that we get the unitization $B^\sharp=\mathbb{C}{~}_{\iota\!\!}\times B$ of $B$.
Some aspects of $A{~}_{\theta\!\!}\times B$ are investigated in \cite{EK, EK1, K}. 
In particular, the $n-$weak amenability of $A{~}_{\theta\!\!}\times B$ are discussed in \cite{EK1}.

\item (\textit{$T-$Lau product of Banach algebras}) Let $A$ and $B$ be Banach algebras 
and let $T:A\to B$ be a continuous homomorphism with $\|T\|\leq 1$. Then the $T-$Lau product
$A{~}_{T}\!\!\times B$ which is equipped with the multiplication
\[(a,b)(c,d)=(ac, T(a)d+bT(c)+bd)\quad (a,c\in A, b,d\in B),\]
can also be viewed as a generalized module extension algebra, in which $B$ furnished 
with the module operations $ab=T(a)b$ and $ba=bT(a)$ is an algebraic Banach $A-$module. 
This type of product was first introduced by
Bhatt and Dabhi \cite{Bh-D} for the case where $B$ is commutative and was extended by
Javanshiri and Nemati for the general case \cite{Javan-N}; see also \cite{NJav}.

In particular, in the case $T=0$, we arrive at the $\ell^1-$direct sum $A\oplus_1 B$ of two Banach algebra $A$ and $B$, equipped with the pointwise multiplication
\[(a,b)(c,d)=(ab,cd)\quad (a,c\in A, b,d\in B).\]
Recently, Choi \cite{Choi} has demonstrated that the $T-$Lau product $A{~}_{T}\!\!\times B$ is isomorphic as a Banach algebra to the usual direct
sum $A\oplus_1 B$.
\end{enumerate}
For a generalized module extension Banach algebra $A\bowtie X$ one can 
directly checked that the $n^{th}-$dual $(A\bowtie X)^{n}$ as a 
Banach $(A\bowtie X)-$module enjoys the following module operations:
\begin{align*}
(F,G) (a,x)&=(Fa,Fx
+G a+Gx),\\
(a,x) (F,G)&=(aF,xF
+aG+xG),\\
(f,g) (a,x)&=(fa+g x
,g x+ga),\\
(a,x)(f,g)&=(af+x g,
x g+ag),
\end{align*}
for all $a\in A, F\in A^{(2n)}, f\in A^{(2n+1)},
x\in X, G\in X^{(2n)}$ and $ g\in X^{(2n+1)}.$

The paper is organized as follows: In Section 2 we investigate $n-$weak amenability of 
$A\bowtie X$ in the case where $n$ is odd. Then we apply our results for the particular cases, 
$A\ltimes X,$ $A{~}_{\theta\!\!}\times B,$ $A{~}_{T}\!\!\times B$ and $A\oplus_1 B.$ 
Section 3 follows the same discipline for the case $n$ is even. 
The case where $X$ is unital is invistigated in Section 4. This special case provides some 
simplifications in the constructions of derivations to various duals of $A\bowtie X$.
\section{$(2n+1)-$weak amenability of $A\bowtie X$}
In this section we characterize $(2n+1)-$weak amenability of the 
generalized module extension Banach algebra $A\bowtie X$ in terms of 
the $(2n+1)-$weak amenability of $A$ and $X$. We begin with the following elementary 
lemma characterizing derivations from $A\bowtie X$ into $(A\bowtie X)^{(2n+1)}$.
\begin{lemma}\label{derivodd} 
Every derivation
$D: A\bowtie X\to (A\bowtie X)^{(2n+1)}\quad (n\in\mathbb N\cup\{0\})$ enjoys the presentation
\begin{equation}\label{der1}
{D}(a,x)=(D_{A}(a)+T_{A}(x),D_{X}(a)+T_{X}(x)),
\end{equation}
where
\begin{enumerate}[\hspace{1em}\rm (a)]
\item $D_{A}:A\to A^{(2n+1)}$ and $D_{X}:A\to X^{(2n+1)}$ are derivations.
\item $T_{A}:X\to A^{(2n+1)}$ is a bounded linear map such that for every $a\in A, x\in X,$\\
$T_{A}(a x)=a T_{A}(x)+D_{X}(a) x$ and
$T_{A}(x a)=T_{A}(x) a+x D_{X}(a).$ 
\item $T_{X}: X\to X^{(2n+1)}$ is a derivation such that for every $a\in A, x\in X,$\\
$T_X(a x)=D_X(a) x+a T_X(x)$,
$T_X(x a)=x D_X(a)+T_X(x) a$ and
$T_X(x) y+x T_X(y)=T_A(xy).$ 
\end{enumerate}
Moreover, $D$ is inner, that is, $D=d_{(f,g)}$ for some $f\in A^{(2n+1)},$ $g\in X^{(2n+1)}$
if and only if $D_{A}=d_{f}, D_{X}=d_{g}$ and
$T_{X}=d_{g}$ are inner derivations and $T_{A}=\delta_{g}$, where
$\delta_{g}(x)=xg-gx$, $(x\in X)$.
\end{lemma}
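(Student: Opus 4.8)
The plan is to decompose an arbitrary derivation $D$ using the direct‑sum structure of $A\bowtie X$ and then translate the single Leibniz identity $D((a,x)(b,y))=D(a,x)\cdot(b,y)+(a,x)\cdot D(b,y)$ into a system of identities relating the four component maps. First I would write $D(a,x)=(P(a,x),Q(a,x))$ where $P$ lands in $A^{(2n+1)}$ and $Q$ in $X^{(2n+1)}$, and then use linearity together with the inclusions $A\times\{0\}$ (a closed subalgebra) and $\{0\}\times X$ (a closed ideal) to split each of $P,Q$ into the four maps $D_A(a)=P(a,0)$, $T_A(x)=P(0,x)$, $D_X(a)=Q(a,0)$, $T_X(x)=Q(0,x)$, giving the presentation \eqref{der1}. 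Boundedness and linearity of each piece are immediate from those of $D$.

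Next I would feed carefully chosen pairs of elements into the derivation identity, using the explicit module actions on $(A\bowtie X)^{(2n+1)}$ recorded just before the lemma. Testing on $(a,0)(b,0)$ isolates the claim that $D_A$ and $D_X$ are derivations into $A^{(2n+1)}$ and $X^{(2n+1)}$ respectively (part (a)); testing on $(0,x)(0,y)$ yields that $T_X$ is a derivation and the compatibility $T_X(x)y+xT_X(y)=T_A(xy)$ in part (c); and testing on the mixed products $(a,0)(0,x)$ and $(0,x)(a,0)$ produces the four module‑type identities for $T_A$ in (b) and the two remaining identities for $T_X$ in (c). Here I must be attentive to the fact that the multiplication on $X$ is genuinely present (unlike in the classical extension), so that the cross terms $ay+xb+xy$ contribute an extra $xy$ compared with $A\ltimes X$; that extra term is exactly what forces the coupling condition $T_X(x)y+xT_X(y)=T_A(xy)$, and keeping track of it correctly in each evaluation is where the bookkeeping is most delicate.

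For the ``moreover'' clause I would compute the inner derivation $d_{(f,g)}(a,x)=(a,x)(f,g)-(f,g)(a,x)$ directly from the stated module operations, expand both products, and read off the four components. The first two components give $D_A(a)=af-fa=d_f(a)$ and $T_A(x)=xg-gx=\delta_g(x)$, while the last two components give $D_X(a)=ag-ga=d_g(a)$ and $T_X(x)=xg-gx=d_g(x)$; matching these against the general decomposition establishes both directions of the equivalence, since the representation in \eqref{der1} is unique.

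I expect the only real obstacle to be the mixed‑product computations in parts (b) and (c). The module actions mix the $A^{(2n+1)}$ and $X^{(2n+1)}$ components in a slightly asymmetric way (for instance $(f,g)(a,x)=(fa+gx,\,gx+ga)$), so one must be careful to project onto the correct factor after expanding, and to separate the contributions of $D_X(a)x$ from $aT_A(x)$ etc. Once the evaluations are organized by which slot each term lands in, the identities fall out by comparing the two sides componentwise, and the argument is otherwise routine linear algebra within the Leibniz rule.
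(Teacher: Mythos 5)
Your proposal is correct and follows essentially the same route as the paper's own (direct) verification: split $D$ into the four component maps via $A\times\{0\}$ and $\{0\}\times X$, evaluate the Leibniz identity on the products $(a,0)(b,0)$, $(a,0)(0,x)$, $(0,x)(a,0)$ and $(0,x)(0,y)$ using the displayed module actions, and obtain the ``moreover'' clause by expanding $d_{(f,g)}(a,x)=(a,x)(f,g)-(f,g)(a,x)$ and matching components, the uniqueness of the decomposition giving both directions. In particular you correctly identified that the extra term $xy$ in the multiplication is what produces the coupling condition $T_X(x)y+xT_X(y)=T_A(xy)$ absent from the classical module extension case.
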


We are ready to prove one of our main result characterizing $(2n+1)-$amenability of $A\bowtie X.$
\begin{theorem}\label{wamenodd}
A generalized module extension Banach algebra $A\bowtie X$ is $(2n+1)-$weakly amenable if and only if
\begin{enumerate}[\hspace{1em}\rm (1)]
\item $A$ is $(2n+1)-$weakly amenable.
\item If $T_X:X\to X^{(2n+1)}$ is a derivation such that there exist a derivation
$D_X:A\to X^{(2n+1)}$ and a bounded linear map $T_A:X\to A^{(2n+1)}$ satisfying
$T_A(a x)=a T_A(x)+D_X(a)x,$
$T_A(x a)=T_A(x) a+xD_X(a),$
$T_X(a x)=D_X(a) x+a T_X(x),$
$T_X(x a)=x D_X(a)+T_X(x) a$ and
$T_X(x) y+x T_X(y)=T_A(xy),$ for all $a\in A, x\in X,$ then
$T_X$ is inner.
\item If $D_X:A\to X^{(2n+1)}$ is a derivation such that
$x D_X(a)=D_X(a) x=0$,  for all $a\in A, x\in X,$ then
there exists an element $g\in X^{(2n+1)}$ such that $D_X=d_{g}$ and $x g=g x$,  for all $x\in X$.
\item If $T_A:X\to A^{(2n+1)}$ is a bounded $A-$module homomorphism such that
$T_A(xy)=0$ for all $x,y\in X,$ then $T_A=0$.
\end{enumerate}
\end{theorem}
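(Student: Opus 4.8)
The plan is to use Lemma~\ref{derivodd} to translate everything into the four components $(D_A,D_X,T_A,T_X)$ of a derivation and the explicit description of when such a quadruple is inner, and then to prove the two implications separately. Recall that $A\bowtie X$ is $(2n+1)$-weakly amenable precisely when every quadruple satisfying conditions (a)--(c) of Lemma~\ref{derivodd} admits a pair $(f,g)$ with $D_A=d_f$, $D_X=d_g$, $T_X=d_g$ and $T_A=\delta_g$; throughout I will use that inner derivations depend linearly on the implementing element, $d_{(f_1,g_1)}+d_{(f_2,g_2)}=d_{(f_1+f_2,g_1+g_2)}$.

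For the forward implication, assume $A\bowtie X$ is $(2n+1)$-weakly amenable. To extract each of (1)--(4) I would feed into the hypothesis a derivation on $A\bowtie X$ built from a single nonzero component, after checking via Lemma~\ref{derivodd} that it is indeed a derivation. For (1), a derivation $D_A\colon A\to A^{(2n+1)}$ gives the quadruple $(D_A,0,0,0)$, whose innerness yields $f$ with $D_A=d_f$. For (2), the given data $(T_X,D_X,T_A)$ are exactly the witnesses making the quadruple $(0,D_X,T_A,T_X)$ satisfy (a)--(c), and innerness forces $T_X=d_g$. For (3) and (4) one uses the quadruples $(0,D_X,0,0)$ and $(0,0,T_A,0)$ respectively; here the innerness datum $g$ carries the extra information recorded in those conditions, since the $T_X$-component $d_g$ of the resulting inner derivation must vanish, which forces $xg=gx$ in (3), and likewise the vanishing of the $D_X$- and $T_X$-components forces $T_A=\delta_g=0$ in (4).

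For the converse, assume (1)--(4) and let $D$ be an arbitrary derivation with quadruple $(D_A,D_X,T_A,T_X)$ as in Lemma~\ref{derivodd}. The strategy is a successive reduction. First (1) gives $f$ with $D_A=d_f$, and then (2), applied to $T_X$ with its witnesses $D_X,T_A$, gives $g$ with $T_X=d_g$. Replacing $D$ by $D':=D-d_{(f,g)}$, which is again a derivation, annihilates the first and last components and leaves the quadruple $(0,\,D_X-d_g,\,T_A-\delta_g,\,0)$. The vanishing of the new $T_X$-component, substituted back into the structural identities of Lemma~\ref{derivodd}, is the engine of the argument: it forces $D'_X(a)x=xD'_X(a)=0$, makes $T'_A$ an $A$-module homomorphism, and gives $T'_A(xy)=0$. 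Now (4) yields $T'_A=0$ and (3) yields $g'$ with $D'_X=d_{g'}$ and $xg'=g'x$. Since the centrality of $g'$ makes both the $T_A$- and $T_X$-components of $d_{(0,g')}$ vanish, one checks componentwise that $D'=d_{(0,g')}$, whence $D=d_{(f,g+g')}$ is inner.

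The main obstacle is the coherence problem in the converse: conditions (1)--(4) supply implementing elements one at a time, and nothing a priori guarantees they assemble into a single pair $(f,g)$ implementing all four components at once. The device that resolves this is the subtraction of $d_{(f,g)}$ after the first two steps. Once the $T_X$-component is killed, the structural identities of Lemma~\ref{derivodd} automatically place the residual data $D'_X$ and $T'_A$ into exactly the hypotheses of (3) and (4), and the centrality condition $xg'=g'x$ is precisely what makes the leftover $g'$ compatible across all remaining components. The care required is in the bookkeeping: verifying that each reduction lands inside the next condition's hypothesis, and that the final $d_{(0,g')}$ reproduces $D'$ exactly in all four slots.
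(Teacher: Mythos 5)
Your overall architecture is the natural (and surely intended) proof of this theorem: translate derivations into quadruples $(D_A,D_X,T_A,T_X)$ via Lemma~\ref{derivodd} together with its converse (which, as you correctly flag, must be verified directly, since the lemma as stated only asserts that derivations have this form); obtain the forward direction by feeding in the single-purpose quadruples $(D_A,0,0,0)$, $(0,D_X,T_A,T_X)$, $(0,D_X,0,0)$, $(0,0,T_A,0)$; and obtain the converse by the subtraction $D'=D-d_{(f,g)}$, after which the structural identities of the lemma with $T'_X=0$ place $D'_X$ and $T'_A$ exactly in the hypotheses of (3) and (4). This reduction scheme, including the observation that the centrality $xg'=g'x$ is what makes $d_{(0,g')}$ reproduce $D'$ in all four slots, is sound. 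One bookkeeping remark on your forward (3): the centrality you later consume in the converse means that \emph{both} commutators vanish, the $X^{(2n+1)}$-valued one ($T_X$-component $d_g=0$) and the $A^{(2n+1)}$-valued one ($T_A$-component $\delta_g=0$); you cite only the former, but since the quadruple you fed in has $T_A=0$, the latter is equally immediate, so this is only a slip of wording.

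The genuine gap is in your forward direction for (4). From innerness of $(0,0,T_A,0)$ you get $T_A=\delta_g$ together with $d_g=0$ both as a map $A\to X^{(2n+1)}$ and as a map $X\to X^{(2n+1)}$, and you assert this ``forces $T_A=\delta_g=0$.'' For $n=0$ this is correct, via the transpose identity $\langle\delta_g(x),a\rangle=\langle g,ax-xa\rangle=-\langle d_g(a),x\rangle$. For $n\geq 1$ it does not follow: the vanishing $d_g(a)=0$ says $\langle g,Ga-aG\rangle=0$ for $a\in A$ and $G\in X^{(2n)}$, whereas $\delta_g(x)$ pairs $g$ against the cross action of \emph{all} of $A^{(2n)}$ on $X$, namely $\langle\delta_g(x),F\rangle=\langle g,Fx-xF\rangle$ for $F\in A^{(2n)}$. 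Centrality of $g$ under the actions of $A$ and of $X$ only kills $\delta_g(x)$ on the canonical image of $A$ inside $A^{(2n)}$, and no weak$^*$-density argument rescues the rest, because $g\in X^{(2n+1)}$ is not weak$^*$ continuous on $X^{(2n)}$. That the implication really fails for higher duals is visible in the paper's own specialization to $A\ltimes X$ (Zhang's theorem, quoted as a corollary): there condition (4) asserts only that every $A$-module morphism $S:X\to A^{(2n+1)}$ has the form $S(x)=xf-fx$ with $af=fa$ for all $a\in A$ --- that is, nonzero maps $\delta_f$ implemented by $A$-central $f$ are explicitly permitted in the $(2n+1)$-weakly amenable situation, exactly the maps your argument claims must vanish. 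So at this step you need either an argument special to the level (as at $n=0$) or some additional hypothesis; as written, the deduction $T_A=0$ from innerness alone is a gap (and the tension it exposes between the theorem's condition (4) and the quoted corollary is real, not an artifact of your proof).
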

As a consequence we have the following result which extends \cite[Theorem 2.4]{EK1}.
\begin{proposition}\label{nweak-odd}
Suppose that both 
$A$ and $X$ are $(2n+1)-$weakly amenable. 
Then $A\bowtie X$ is $(2n+1)-$weakly amenable when either of the following condition holds.
\begin{enumerate}[\hspace{1em}\rm (1)]
\item $\langle XX^{(2n)}+X^{(2n)}X\rangle$ is dense in $X^{(2n)}$.
\item $\langle XA^{(2n)}+A^{(2n)}X\rangle$ is dense in $X^{(2n)}$.
\end{enumerate}
\end{proposition}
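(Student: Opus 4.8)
The plan is to run through the four conditions of Theorem~\ref{wamenodd} and check each, invoking the two standing hypotheses and bringing in the density assumption only where it is actually needed. Condition~(1) is literally the hypothesis that $A$ is $(2n+1)$-weakly amenable. Condition~(2) is immediate as well: a map $T_X\colon X\to X^{(2n+1)}$ of the sort described there is in particular a derivation, and since $X$ is $(2n+1)$-weakly amenable every such derivation is inner; the auxiliary relations tying $T_X$ to $D_X$ and $T_A$ are not needed for this.

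The fact that drives everything else is $\overline{X^2}=X$. I would deduce it from the hypothesis on $X$ by first applying the descent $(m+2)$-weak amenability $\Rightarrow m$-weak amenability of Dales--Ghahramani--Gr{\o}nb{\ae}k \cite{DGG}, which reduces $(2n+1)$-weak amenability of $X$ to ordinary weak amenability; a weakly amenable Banach algebra has dense square (if $\lambda\in X^{*}$ annihilated $X^{2}$, then $D(x)=\lambda(x)\lambda$ would be a derivation $X\to X^{*}$, and writing $D=d_{\psi}$ and evaluating at the pair $(x,x)$ would force $\lambda(x)^{2}=0$, so $\lambda=0$). With $\overline{X^2}=X$ in hand, condition~(4) is automatic and uses no density whatsoever: $T_A$ is bounded and vanishes on $X^{2}$, hence on $\overline{X^{2}}=X$, so $T_A=0$.

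Only condition~(3) depends on the density hypothesis, and my plan is to show that in either case the target space collapses. Put $Z:=\{g\in X^{(2n+1)}:xg=gx=0\ (x\in X)\}$. The assumptions $xD_X(a)=D_X(a)x=0$ say exactly that $D_X(A)\subseteq Z$, and dualising the module actions identifies $Z$ with the annihilator $\bigl(\overline{\langle XX^{(2n)}+X^{(2n)}X\rangle}\bigr)^{\perp}$ in $X^{(2n+1)}=(X^{(2n)})^{*}$. I would prove $Z=\{0\}$, so that $D_X=0$ and~(3) holds with $g=0$. Under hypothesis~(1) this is instant, since $\langle XX^{(2n)}+X^{(2n)}X\rangle$ is then dense and its annihilator is trivial. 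Under hypothesis~(2) I would use $\overline{X^2}=X$ to show that $XA^{(2n)}+A^{(2n)}X$ is already contained in $\overline{\langle XX^{(2n)}+X^{(2n)}X\rangle}$: for $F\in A^{(2n)}$ and $y=\lim\sum u_iv_i\in\overline{X^{2}}$ one has $Fy=\lim\sum(Fu_i)v_i\in\overline{X^{(2n)}X}$ and, symmetrically, $yF=\lim\sum u_i(v_iF)\in\overline{XX^{(2n)}}$. Hence any $g\in Z$ annihilates $XA^{(2n)}+A^{(2n)}X$, which is dense by~(2), so $g=0$ and again $Z=\{0\}$. Verifying~(1)--(4) in this way, Theorem~\ref{wamenodd} yields that $A\bowtie X$ is $(2n+1)$-weakly amenable.

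The step I expect to need the most care is the bidual compatibility $F(uv)=(Fu)v$ (and its mirror $(uv)F=u(vF)$) used for hypothesis~(2): it is the extension to $F\in A^{(2n)}$ of the algebraic-module relation $a(uv)=(au)v$. I would justify it by choosing a net in $A$ converging weak-$*$ to $F$ and exploiting the one-sided weak-$*$ continuity of the relevant module Arens products, so that $F(uv)$ and $(Fu)v$ are the same weak-$*$ limit of $\widehat{(b_\beta u)v}$; the boundedness of $g$ then lets the limit pass through the pairing $\langle g,\cdot\rangle$. Once this routine-but-delicate identity and the descent to $\overline{X^{2}}=X$ are established, the proof reduces entirely to the two clean inputs, $\overline{X^{2}}=X$ and the chosen density, as above.
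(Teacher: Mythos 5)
Your proof is correct and takes essentially the same route the paper does: verify the four conditions of Theorem~\ref{wamenodd}, with (1) and (2) immediate from the $(2n+1)$-weak amenability of $A$ and $X$, with $\overline{X^2}=X$ extracted from \cite[Proposition 1.2]{DGG} (descent plus dense square) to kill condition (4), and with annihilator/duality arguments identifying $\{g\in X^{(2n+1)}:xg=gx=0\}$ with the annihilator of $\langle XX^{(2n)}+X^{(2n)}X\rangle$ to settle condition (3). The only minor divergence is organizational: you reduce density hypothesis (2) to hypothesis (1) via the iterated-dual associativity $F(uv)=(Fu)v$ and $(uv)F=u(vF)$, whereas one can also handle case (2) directly, since the hypotheses of condition (3) (coming from Lemma~\ref{derivodd} with $T_A=0$) give the $A^{(2n+1)}$-valued vanishing $xD_X(a)=D_X(a)x=0$, which pairs immediately against $XA^{(2n)}+A^{(2n)}X$; your bridging lemma is sound (the one-sided weak-$*$ continuities you invoke do hold on the correct sides for the canonical iterated-dual actions) but avoidable.
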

%

Applying Theorem \ref{wamenodd} for the calassical module extension Banach 
algebra $A\ltimes X$ we arrive at the following result which has already proved by Zhang \cite{Z} by a slightly different method.
\begin{corollary}[{\cite[Theorem 2.1]{Z}}]
A (classical) module extension Banach algebra $A\ltimes X$ is $(2n+1)-$weakly amenable if and only if
\begin{enumerate}[\hspace{1em}\rm (1)]
\item $A$ is $(2n+1)-$weakly amenable.
\item The only $A-$module morphism $T:X\rightarrow X^{(2n+1)}$ such that $xT(y)+T(x)y=0$, in $A^{(2n+1)}$, for all $x,y\in X$, is zero.
\item $H^1(A,X^{(2n+1)})=\{0\}$; that is, every derivation from $A$ to $X^{(2n+1)}$ is inner.
\item For every continuous $A-$module morphism $S:X\rightarrow A^{(2n+1)}$, 
there exists $f\in X^{(2n+1)}$ such that $af=fa$ for all $a\in A$ and $S(x)=xf-fx$ for all $x\in X$.
\end{enumerate}
\end{corollary}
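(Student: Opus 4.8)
The plan is to read the Corollary off from Theorem~\ref{wamenodd} by specializing to the case in which the multiplication of $X$ is identically zero, since $A\ltimes X$ is precisely $A\bowtie X$ with $xy=0$. The first step is to record the effect of $xy=0$ on the module operations of $(A\bowtie X)^{(2n+1)}$ recorded in Section~1: the actions of $X$ on the higher duals $X^{(2n+1)}$ that are \emph{adjoint to the multiplication of} $X$ all collapse, so that $xg=gx=0$ in $X^{(2n+1)}$ for every $x\in X$ and $g\in X^{(2n+1)}$, while the actions adjoint to the $A$-module structure of $X$ (those taking values in $A^{(2n+1)}$) are untouched. Throughout I would keep careful track of the two readings of a symbol such as $xg$: one landing in $A^{(2n+1)}$, coming from the $A$-bimodule structure, and one landing in $X^{(2n+1)}$, coming from the algebra $X$, the latter being the one that now vanishes.

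With this in hand I would dispose of the four conditions of Theorem~\ref{wamenodd} one at a time. Condition (1) is literally condition (1) of the Corollary. In condition (2), ``$T_X$ is a derivation'' is automatic once $xy=0$, the relations tying $T_X$ to $D_X$ and $T_A$ reduce to the statement that $T_X$ is an $A$-module morphism, the compatibility $T_X(x)y+xT_X(y)=T_A(xy)$ becomes $xT_X(y)+T_X(x)y=0$ in $A^{(2n+1)}$ (because $T_A(xy)=T_A(0)=0$), and, by Lemma~\ref{derivodd}, ``$T_X$ inner'' means $T_X=d_g=0$; this is exactly condition (2) of the Corollary. In condition (3) both the hypothesis $xD_X(a)=D_X(a)x=0$ and the extra conclusion $xg=gx$ are statements about the now-vanishing $X^{(2n+1)}$-valued products, hence are vacuous, leaving only ``every derivation $D_X\colon A\to X^{(2n+1)}$ equals some $d_g$'', i.e.\ $H^1(A,X^{(2n+1)})=\{0\}$, which is condition (3) of the Corollary.

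The crux is condition (4). Specializing Theorem~\ref{wamenodd}(4) gives that \emph{every} continuous $A$-module morphism $S\colon X\to A^{(2n+1)}$ is zero, since its hypothesis $S(xy)=0$ is automatic. To reconcile this with condition (4) of the Corollary I would prove the key observation that, for $f\in X^{(2n+1)}$, the two conditions $d_f=0$ (that is, $af=fa$ for all $a\in A$) and $\delta_f=0$ (that is, $xf=fx$ for all $x\in X$) are equivalent: unwinding the pairings, each asserts that $f$ annihilates the module-commutator subspace $\langle a\cdot\xi-\xi\cdot a\rangle$ (for $n=0$ with $\xi\in X$, and at the appropriate predual level in general), the only difference between the two being an interchange of the left and right actions, which merely changes a sign. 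Granting this, condition (4) of the Corollary, which presents $S$ as $\delta_f$ with $f$ central (i.e.\ $d_f=0$), forces $S=\delta_f=0$, matching the specialized Theorem~\ref{wamenodd}(4); conversely the specialized condition yields the Corollary's with $f=0$. I expect this equivalence $d_f=0\Leftrightarrow\delta_f=0$, together with the disciplined bookkeeping of the two products, to be the only real obstacle; once it is in place, assembling (1)--(4) completes the proof.
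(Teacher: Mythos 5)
Your overall route is the paper's route: the paper's entire proof is the single sentence ``use Theorem~\ref{wamenodd} in the case $xy=0$,'' and your handling of conditions (1)--(3) fills in that specialization correctly in spirit (derivation-into-$X^{(2n+1)}$ conditions trivialize, $T_X$ inner means $T_X=0$, etc.). The problem is precisely at the point you yourself call the crux. Your bridging claim, that for $f\in X^{(2n+1)}$ the conditions $d_f=0$ (i.e.\ $af=fa$ for all $a\in A$) and $\delta_f=0$ (i.e.\ $xf=fx$ in $A^{(2n+1)}$ for all $x\in X$) are equivalent, is only justified by your argument when $n=0$. Unwinding the iterated-dual pairings, $d_f=0$ says that $f$ annihilates the subspace $\langle a\Xi-\Xi a:\ a\in A,\ \Xi\in X^{(2n)}\rangle$ of $X^{(2n)}$, whereas $\delta_f=0$ says that $f$ annihilates $\langle Fx-xF:\ F\in A^{(2n)},\ x\in X\rangle$. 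For $n=0$ both are the span of $\{ax-xa: a\in A, x\in X\}$ and your ``interchange left and right, change a sign'' observation is correct. For $n\geq 1$ these are genuinely different subspaces: in one the \emph{module} factor is promoted to the $2n$-th dual, in the other the \emph{algebra} factor is. They are related only through weak$^{*}$ limits (by Goldstine, $Fx-xF$ is a weak$^{*}$ limit of elements $a_\alpha x-xa_\alpha$, and $a\Xi-\Xi a$ a weak$^{*}$ limit of $ax_\beta-x_\beta a$), and since $f\in X^{(2n+1)}=(X^{(2n)})^{*}$ is norm continuous but not weak$^{*}$ continuous, annihilating one subspace does not transfer to the other. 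So the equivalence $d_f=0\Leftrightarrow\delta_f=0$, on which your reconciliation of Theorem~\ref{wamenodd}(4) with Zhang's condition (4) entirely rests, is unproved (and not obviously true) for every $n\geq 1$ --- which is most of the content of the statement.

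A secondary instance of the same overloaded-notation trap occurs at condition (3): you read the hypothesis $xD_X(a)=D_X(a)x=0$ of Theorem~\ref{wamenodd}(3) as referring only to the $X^{(2n+1)}$-valued products coming from the multiplication of $X$ (hence vacuous when $xy=0$). But for the corner map $(a,x)\mapsto(0,D_X(a))$ to be a derivation on $A\ltimes X$ one also needs the $A^{(2n+1)}$-valued pairings $D_X(a)x$, $xD_X(a)$ (coming from the $A$-module structure, via Lemma~\ref{derivodd}(b) with $T_A=0$) to vanish, and these do \emph{not} die when $xy=0$. Under that reading the specialized (3) is not literally $H^1(A,X^{(2n+1)})=\{0\}$, and matching the specialized conditions (3)--(4) with Zhang's (3)--(4) requires a correction argument (subtracting suitable inner parts and comparing the two slicings of the derivation space), not a term-by-term identification. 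To repair your proof you would need either an honest proof of the $d_f=0\Rightarrow\delta_f=0$ implication in the iterated-dual setting, or else to abandon the one-to-one matching of conditions and instead verify directly that the conjunction of the specialized conditions of Theorem~\ref{wamenodd} is equivalent to the conjunction of Zhang's four conditions, by running the decomposition of Lemma~\ref{derivodd} with $xy=0$ from scratch.
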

\begin{proof}
It is enough to use Theorem \ref{wamenodd} for the case where $xy=0$ for all $x,y\in X.$
\end{proof}

As another consequence of  Theorem \ref{wamenodd}, we use it for the $\theta-$Lau product 
Banach algebra $A{~}_{\theta\!\!}\times B.$ Then we 
 get the following characterization for $(2n+1)-$weak amenability of $A{~}_{\theta\!\!}\times B$ 
which extends the related results in \cite{EK1}.
\begin{corollary}\label{Lauodd}
The $\theta-$Lau product Banach algebra $A{~}_{\theta\!\!}\times B$ is $(2n+1)-$weakly amenable if and only if
\begin{enumerate}[\hspace{1em}\rm (1)]
\item $A$ is $(2n+1)-$weakly amenable.
\item Every derivation $T:B\rightarrow B^{(2n+1)}$ for which there exists a bounded 
$A-$module homomorphism $S:B\rightarrow A^{(2n+1)}$ such that $\left(T(b)(d)+T(d)(d)\right)\theta=S(bd)$ for all $b,d\in B$ is inner.
\item The only bounded linear map $D:A\to B^{(2n+1)}$ such that
$D(ac)=\theta(a)D(c)+\theta(c)D(a)$ and 
$bD(a)=0=D(a)b$ for all $a,c\in A$ and $b\in B$, is zero.
\item 
The only bounded linear operator $S:B\to A^{(2n+1)}$ such that 
$S(bd)=0, aS(b)=S(b)a=\theta(a)S(b)$ for all $a\in A$ and $b,d\in B$, 
is zero.
\end{enumerate}
\end{corollary}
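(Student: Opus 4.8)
The plan is to read off the corollary directly from Theorem~\ref{wamenodd}, applied to the generalized module extension $A\bowtie B$ that realizes $A{~}_{\theta\!\!}\times B$; here $X=B$ carries the symmetric module actions $a\cdot b=b\cdot a=\theta(a)b$. The first task is to record how these actions propagate to the duals: since the left and right actions of $A$ on $B$ both reduce to scalar multiplication by $\theta(a)$, an easy induction shows that $A$ acts on every dual $B^{(k)}$ again by $a\cdot\xi=\xi\cdot a=\theta(a)\xi$, while $A$ acts on its own duals $A^{(k)}$ in the usual way. The only genuinely new computation is that of the two mixed products $B^{(2n+1)}\times B$ coming from the module operations on $(A\bowtie B)^{(2n+1)}$: the one landing in $B^{(2n+1)}$ is the ordinary module product, whereas the one landing in $A^{(2n+1)}$ collapses to $g\cdot y=\langle g,y\rangle\,\theta$ (with $y$ regarded in $B^{(2n)}$ and $\theta\in A^{*}\hookrightarrow A^{(2n+1)}$). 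I would verify this last formula once, directly from the definition of the module action, and then use it throughout.

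With these identifications in hand, I would translate the four conditions of Theorem~\ref{wamenodd} one at a time. Condition (1) is literally condition (1) of the corollary. For condition (4), unravelling ``bounded $A$-module homomorphism'' for $T_A=S\colon B\to A^{(2n+1)}$ gives precisely $aS(b)=S(b)a=\theta(a)S(b)$, and $T_A(xy)=0$ becomes $S(bd)=0$, so (4) matches verbatim. For condition (3), a derivation $D_X\colon A\to B^{(2n+1)}$ unfolds, via the $\theta$-scaling action, to the Leibniz rule $D(ac)=\theta(a)D(c)+\theta(c)D(a)$, and the hypotheses $xD_X(a)=D_X(a)x=0$ become $bD(a)=D(a)b=0$; the crucial point is that every inner derivation $d_g\colon A\to B^{(2n+1)}$ is identically zero, because $d_g(a)=a\cdot g-g\cdot a=\theta(a)g-\theta(a)g=0$. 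Hence the conclusion ``$D_X=d_g$ for a central $g$'' in Theorem~\ref{wamenodd}(3) is equivalent (taking $g=0$) to ``$D=0$'', which is exactly the corollary's condition (3).

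The real work is condition (2). Here the five identities of Theorem~\ref{wamenodd}(2) must be fed through the $\theta$-actions and the mixed-product formula. Two of them force $D_X(a)\cdot_B x=x\cdot_B D_X(a)=0$ in $B^{(2n+1)}$, and the fifth becomes $\bigl(T(b)(d)+T(d)(b)\bigr)\theta=T_A(bd)$, which is the displayed relation of the corollary with $S=T_A$. To obtain the equivalence of the two versions of condition (2) I would argue in both directions: given a module homomorphism $S$ satisfying the pairing identity, setting $D_X=0$ and $T_A=S$ satisfies all five relations---the first four reduce to the module-homomorphism identities for $S$ and the fifth is the pairing hypothesis itself---so Theorem~\ref{wamenodd}(2) yields that $T_X=T$ is inner; conversely, starting from the five relations I would note that the vanishing of the $B^{(2n+1)}$-valued products of $D_X$ forces $\langle D_X(a),xy\rangle=0$, whence $T_A$ restricted to products already obeys the module-homomorphism identity $a\cdot T_A(xy)=\theta(a)T_A(xy)$, so that $T_A$ may play the role of $S$ on $B^{2}$ and the pairing identity can be invoked. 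I expect this last bookkeeping---disentangling the genuinely two-valued product $B^{(2n+1)}\times B$ and confirming that the auxiliary datum $D_X$ contributes nothing to the inner-ness of $T_X$, so that $T_A$ may be upgraded to (or replaced by) the $A$-module homomorphism $S$---to be the main obstacle; everything else is a direct substitution into Theorem~\ref{wamenodd}.
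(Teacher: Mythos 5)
Your overall route is exactly the paper's: the corollary is obtained by specializing Theorem~\ref{wamenodd} to $X=B$ with the actions $ab=ba=\theta(a)b$, and your preparatory computations are the right ones --- the induction giving $a\xi=\xi a=\theta(a)\xi$ on every $B^{(k)}$, the collapse of the mixed product to $g\cdot y=\langle g,y\rangle\theta$ in $A^{(2n+1)}$, and the observation that every inner derivation $d_g\colon A\to B^{(2n+1)}$ vanishes (which is what turns the theorem's condition (3), ``$D_X=d_g$ with $xg=gx$,'' into the corollary's ``$D=0$,'' taking $g=0$). Your translations of conditions (1), (3) and (4) are correct, as is the forward half of (2) (given a module homomorphism $S$ with the pairing identity, take $D_X=0$ and feed $(T,0,S)$ into the theorem); you also correctly read the displayed identity as $\left(T(b)(d)+T(d)(b)\right)\theta=S(bd)$, the ``$T(d)(d)$'' in the statement being a typo.

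The one genuine soft spot is the converse half of (2), precisely the step you yourself flag as ``the main obstacle,'' and your proposed repair does not work as stated: knowing that $T_A$ obeys the module-homomorphism identity only on products $bd$ does not let you invoke the corollary's condition (2), which quantifies over maps $S$ that are $A$-module homomorphisms on \emph{all} of $B$; the span of $B^2$ need not be closed or complemented, so $T_A|_{\langle B^2\rangle}$ cannot in general be upgraded to such an $S$, and off products $T_A$ genuinely fails the identity, since relations (i)--(ii) of the theorem give $\theta(a)T_A(b)-aT_A(b)=\langle D_X(a),b\rangle\,\theta$, which need not vanish. The correct fix is available inside the very conjunction you are proving equivalent: relations (iii)--(iv) force $D_X(a)b=bD_X(a)=0$ in $B^{(2n+1)}$, and $D_X$ satisfies the $\theta$-Leibniz rule $D_X(ac)=\theta(a)D_X(c)+\theta(c)D_X(a)$, so the corollary's condition (3) yields $D_X=0$ outright; then (i)--(ii) say $T_A$ is an $A$-module homomorphism on all of $B$, (v) is the pairing identity, and condition (2) applies verbatim to give $T_X$ inner. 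In other words, you should prove the four conditions equivalent as a conjunction (theorem (2) follows from corollary (2) \emph{and} (3)), rather than condition-by-condition; with that adjustment your argument is complete and coincides with the paper's.
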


Applying Theorem \ref{wamenodd} for the case where $X=A$, with the multiplication as the module operation, we 
have the following characterization of $(2n+1)-$weak 
amenability of $A\bowtie A$.
\begin{corollary}\label{B=Aodd}
$A\bowtie A$ is $(2n+1)-$weakly amenable if and only if
$A$ is $(2n+1)-$weakly amenable.
\end{corollary}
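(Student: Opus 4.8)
The plan is to read off the result from Theorem~\ref{wamenodd} applied with $X=A$, where the module operations on $X$ are the multiplication of $A$. One implication is immediate: if $A\bowtie A$ is $(2n+1)$-weakly amenable, then condition~(1) of Theorem~\ref{wamenodd} is precisely the assertion that $A$ is $(2n+1)$-weakly amenable. The content lies in the converse, for which I must verify that when $A$ is $(2n+1)$-weakly amenable all four conditions of Theorem~\ref{wamenodd} hold.

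The one fact I would isolate first is that a $(2n+1)$-weakly amenable Banach algebra satisfies $\overline{A^2}=A$. Were this false, there would be a nonzero $\lambda\in A^*$ vanishing on $A^2$, whence $a\cdot\lambda=\lambda\cdot a=0$ in $A^*$ for every $a$. Letting $\hat\lambda\in A^{(2n+1)}$ denote the image of $\lambda$ under the iterated (bimodule) canonical embedding, the map $D(a)=\lambda(a)\hat\lambda$ is then a nonzero derivation $A\to A^{(2n+1)}$. Composing $D$ with the bimodule restriction map $\rho\colon A^{(2n+1)}\to A^*$ dual to $A\hookrightarrow A^{(2n)}$ yields the derivation $\tilde D(a)=\lambda(a)\lambda$; pairing $\tilde D(a)$ with $a$ gives $\lambda(a)^2$, whereas every inner derivation pairs with $a$ to $0$, so $\tilde D$ is not inner. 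Since $\rho$ sends inner derivations to inner derivations, $D$ cannot be inner either, contradicting $(2n+1)$-weak amenability.

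With the density of $A^2$ established, the four conditions collapse. Condition~(1) holds by hypothesis. For condition~(2) the auxiliary data $D_X,T_A$ play no role: every derivation $T_X\colon A\to A^{(2n+1)}$ is inner simply because $A$ is $(2n+1)$-weakly amenable, so the required conclusion holds a fortiori. For condition~(3), a derivation $D_X\colon A\to A^{(2n+1)}$ with $x D_X(a)=D_X(a)x=0$ satisfies $D_X(ab)=D_X(a)b+aD_X(b)=0$, hence vanishes on $A^2$ and, by continuity, on $A=\overline{A^2}$; thus $D_X=0=d_0$ and $g=0$ works. Condition~(4) is analogous: a bounded bimodule homomorphism $T_A$ with $T_A(xy)=0$ vanishes on $A^2$, hence on $A$, so $T_A=0$.

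The only step demanding genuine care is the density lemma $\overline{A^2}=A$; the remainder is a routine specialization of Theorem~\ref{wamenodd} to $X=A$, in which the hypotheses of~(2)--(4) are either redundant or force the pertinent map to vanish on the dense subalgebra $A^2$. Accordingly I would present the density lemma first and then dispatch conditions~(1)--(4) in turn.
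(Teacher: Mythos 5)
Your proof is correct and follows essentially the same route as the paper: the paper's (implicit) proof is exactly the specialization of Theorem~\ref{wamenodd} to $X=A$ with multiplication as the module action, where conditions (2)--(4) collapse because $(2n+1)$-weak amenability makes every derivation $A\to A^{(2n+1)}$ inner and forces $\overline{A^2}=A$. The only difference is cosmetic: you prove the density fact $\overline{A^2}=A$ from scratch (correctly, via the iterated canonical embeddings and the restriction map), whereas the paper quotes it from \cite[Proposition 1.2]{DGG}.
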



Applying Theorem \ref{wamenodd} for the direct sum algebra $A\oplus_1 B$ 
we get the following well known result.
\begin{corollary}\label{zeromododd}
$A\oplus_1 B$ is $(2n+1)-$weakly amenable if and only if
both $A$ and $B$ are $(2n+1)-$weakly amenable.
\end{corollary}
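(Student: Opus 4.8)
The plan is to apply Theorem~\ref{wamenodd} directly to the special case $A\oplus_1 B$, which arises as the generalized module extension $A\bowtie X$ with $X=B$ endowed with the \emph{trivial} module actions $ab=ba=0$ for all $a\in A$ and $b\in B$. This identification is the crucial first observation: setting $T=0$ in the $T$-Lau product makes all the cross-terms $T(a)d$ and $bT(c)$ vanish, so the multiplication collapses to the pointwise product $(a,b)(c,d)=(ac,bd)$, and the module operations of $B$ as an $A$-module become identically zero. I would state this explicitly so that every hypothesis in Theorem~\ref{wamenodd} can be read off under the standing assumption $ax=xa=0$ for all $a\in A$, $x\in X(=B)$.

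Next I would verify the four conditions of Theorem~\ref{wamenodd} one by one, exploiting the vanishing of the module actions to trivialize the interaction between $A$ and $B$. Condition~(1) is simply the $(2n+1)$-weak amenability of $A$, which we must prove is equivalent to one half of the conclusion. For condition~(2), the hypotheses involving $D_X$ and $T_A$ simplify dramatically: the relations $T_A(ax)=aT_A(x)+D_X(a)x$ and $T_X(ax)=D_X(a)x+aT_X(x)$ reduce to $T_A(0)=0$ and $T_X(0)=0$ once $ax=0$, so the only surviving constraint is the derivation condition on $T_X\colon B\to B^{(2n+1)}$ together with $T_X(x)y+xT_X(y)=T_A(xy)$; here I would argue that $(2n+1)$-weak amenability of $B$ forces $T_X$ to be inner. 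Conditions~(3) and (4) concern maps $D_X\colon A\to X^{(2n+1)}$ and $T_A\colon X\to A^{(2n+1)}$ that are automatically constrained by $xD_X(a)=D_X(a)x=0$ and the $A$-module homomorphism property; since the module actions are trivial, these conditions are satisfied vacuously or reduce to statements that follow from the weak amenability of the two factors.

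The main obstacle I anticipate is the bookkeeping in condition~(2): one must show that the existence of auxiliary maps $D_X$ and $T_A$ satisfying the listed identities does not impose any genuine extra constraint beyond forcing $T_X$ to be an ordinary derivation on $B$, so that $(2n+1)$-weak amenability of $B$ alone suffices to conclude $T_X$ is inner. Care is needed to check that the compatibility relation $T_X(x)y+xT_X(y)=T_A(xy)$ can always be met (for instance by an appropriate choice of $T_A$, or by observing it reduces to a condition already guaranteed), rather than obstructing innerness. For the converse direction, I would show that if $A\oplus_1 B$ is $(2n+1)$-weakly amenable then each factor inherits the property by restricting derivations: a derivation on $A$ lifts to one on $A\oplus_1 B$ through the quotient $A\oplus_1 B/(\{0\}\times B)\cong A$, and symmetrically for $B$ using the closed-ideal structure of $\{0\}\times B$. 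Assembling these observations gives the stated equivalence.
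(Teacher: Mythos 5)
Your overall strategy is exactly the paper's: the corollary is obtained by specializing Theorem~\ref{wamenodd} to $X=B$ carrying the trivial module actions $ax=xa=0$, and the ``only if'' direction drops out of conditions (1) and (2) alone (your alternative converse via restricting/lifting derivations through $\{0\}\times B$ is redundant given that the theorem is an equivalence, but it is sound). Your handling of condition (2) is essentially right: for an arbitrary derivation $T_X\colon B\to B^{(2n+1)}$ choose $D_X=0$ and $T_A=0$; then $aT_X(x)$, $T_X(x)a$, $D_X(a)x$, $xD_X(a)$ and the $A^{(2n+1)}$-valued products $T_X(x)y+xT_X(y)$ all vanish because the dual-module actions induced by the trivial actions are zero, so the hypotheses of (2) are met and that condition is exactly $(2n+1)$-weak amenability of $B$. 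One caution: the relations do \emph{not} reduce to ``$T_A(0)=0$ and $T_X(0)=0$'' as you wrote --- with $ax=0$ they reduce to genuine constraints such as $aT_A(x)=0$, where $a$ acts canonically (nontrivially) on $A^{(2n+1)}$. This misreading is harmless only because those relations sit on the hypothesis side of condition (2) and are satisfied by the choice $D_X=T_A=0$.

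The genuine gap is your claim that conditions (3) and (4) are ``satisfied vacuously or reduce to statements that follow from the weak amenability of the two factors.'' Neither is vacuous, and the second half of your sentence names the right hypothesis without supplying the mechanism. With trivial actions, a derivation $D_X\colon A\to X^{(2n+1)}$ is simply a bounded linear map with $D_X(ab)=0$ for all $a,b\in A$, and since every inner derivation $d_g$ for the trivial action is zero, condition (3) demands precisely that every such map (with $xD_X(a)=D_X(a)x=0$ in $B^{(2n+1)}$) vanish; likewise condition (4) demands that every bounded linear $T_A\colon B\to A^{(2n+1)}$ with $T_A(xy)=0$ for all $x,y\in B$ vanish. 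These assertions fail for general Banach algebras: if $\overline{A^2}\neq A$, a nonzero functional annihilating $A^2$ produces a nonzero $D_X$ satisfying all the hypotheses. What rescues them under the corollary's hypotheses is the fact the paper itself invokes from \cite[Proposition 1.2]{DGG}: $(2n+1)$-weak amenability of $A$ (resp.\ $B$) forces $\overline{AA}=A$ (resp.\ $\overline{BB}=B$), whence continuity of $D_X$ and $T_A$, together with their vanishing on products, kills them. Spelling this out is the entire content of the ``if'' direction beyond conditions (1) and (2), and your proof is incomplete without it.
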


\section{$(2n)-$weak amenability of $A\bowtie X$}
In this section we characterize $(2n)-$weak amenability of the generalized module 
extension Banach algebra $A\bowtie X$ in terms of the $(2n)-$weak amenability 
of $A$ and $X$. Similar to the previous section (Lemma \ref{derivodd}) we begin with the following elementary 
lemma characterizing the derivations from $A\bowtie X$ into $(A\bowtie X)^{(2n)}$.
\begin{lemma}\label{deriveven} Every derivation
$D: A\bowtie X\to (A\bowtie X)^{(2n)},\quad (n\in\mathbb N\cup\{0\})$, enjoys the presentation
\begin{equation}\label{der2}
{D}(a,x)=(D_{A}(a)+T_{A}(x),D_{X}(a)+T_{X}(x)),
\end{equation}
where
\begin{enumerate}[\hspace{1em}\rm (a)]
\item $D_{A}:A\to A^{(2n)}$ and $D_{X}:A\to X^{(2n)}$ are derivation.
\item $T_{A}:X\to A^{(2n)}$ is a bounded $A$-module homomorphism satisfying
$T_{A}(xy)=0$, for all $x,y\in X$.
\item $T_{X}:X\to X^{(2n)}$ is a bounded linear map such that
$T_X(a x)=D_A(a) x+D_X(a) x+a T_X(x),$ 
$T_X(x a)=x D_A(a)+x D_X(a)+T_X(x) a$ and
$T_X(xy)=x T_X(y)+T_X(x) y+x T_A(y)+T_A(x) y,$ for each $a\in A, x,y\in X.$
\end{enumerate}
Moreover, $D=d_{(F,G)}$ for some $F\in A^{(2n)}, G\in X^{2n},$ if and only if
$D_{A}=d_{F}, D_{X}=d_{G}$ and
$T_{X}=d_{G}+\delta_{F}$ (are inner derivations), and $T_{A}=0$, where
$\delta_{F}(x)=x F-F x,\ (x\in X)$.
\end{lemma}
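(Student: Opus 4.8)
The plan is to use the coordinate decomposition that the statement already anticipates. First I would write an arbitrary derivation as $D(a,x)=(\phi(a,x),\psi(a,x))$ with bounded linear maps $\phi\colon A\bowtie X\to A^{(2n)}$ and $\psi\colon A\bowtie X\to X^{(2n)}$, and split each variable off by setting
$$D_A(a)=\phi(a,0),\quad T_A(x)=\phi(0,x),\quad D_X(a)=\psi(a,0),\quad T_X(x)=\psi(0,x).$$
Linearity then gives the presentation \eqref{der2} for free, and the whole content of the lemma becomes a rewriting of the single identity $D((a,x)(b,y))=D(a,x)(b,y)+(a,x)D(b,y)$ after expanding both sides with the multiplication of $A\bowtie X$ and the $(2n)$-th dual module actions listed before Section~2.

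Next I would compare the two coordinates separately. The $A^{(2n)}$-coordinate collapses to $D_A(ab)+T_A(ay+xb+xy)=D_A(a)b+T_A(x)b+aD_A(b)+aT_A(y)$, and the four specializations $(x,y)=(0,0)$, $(x,b)=(0,0)$, $(a,y)=(0,0)$, $(a,b)=(0,0)$ yield in turn that $D_A$ is a derivation, that $T_A$ is a left and right $A$-module map, and that $T_A(xy)=0$; this is exactly parts (a) and (b). The $X^{(2n)}$-coordinate, once expanded with the six distinct actions $Fx,\ xF,\ Ga,\ aG,\ Gy,\ xG$, gives a longer identity whose analogous specializations show $D_X$ is a derivation and produce the three intertwining relations for $T_X$ in part (c). Because each relation is obtained by a reversible substitution, reassembling them proves the converse at the same time, so no separate verification that (a)--(c) define a derivation is required.

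For the inner clause I would compute $d_{(F,G)}(a,x)=(a,x)(F,G)-(F,G)(a,x)$ with the same formulas, which gives
$$d_{(F,G)}(a,x)=\bigl(aF-Fa,\ (xF-Fx)+(aG-Ga)+(xG-Gx)\bigr),$$
and then match coordinates against \eqref{der2} to read off $D_A=d_F$, $D_X=d_G$ and $T_X=\delta_F+d_G$, the reverse implication being the same identity read backwards. The point worth isolating is that the first coordinate $aF-Fa$ carries no dependence on $x$, which forces $T_A=\phi(0,\cdot)=0$; this is the structural reason the even case looks different from the odd one, where the $A$-valued part of an inner derivation can still detect $X$.

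I do not expect a conceptual obstacle here; the difficulty is purely clerical. The one thing to guard against is conflating the six module actions above, for instance treating the action of $A^{(2n)}$ on $X$ (valued in $X^{(2n)}$) as if it were its action on $A$, or mixing the action of $X^{(2n)}$ on $A$ with that of $X$ on $X^{(2n)}$. Organizing the argument as one displayed identity per coordinate followed by four sign-free specializations each keeps these apart and makes both the extraction of (a)--(c) and the reverse reassembly transparent.
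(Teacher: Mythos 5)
Your proposal is correct: the coordinate decomposition, the four bilinear specializations in each coordinate, and the direct computation of $d_{(F,G)}(a,x)=(aF-Fa,\,(xF-Fx)+(aG-Ga)+(xG-Gx))$ are exactly the routine verification the paper intends for this lemma, which it states as elementary and leaves unproved. Your observation that bilinearity makes the four specializations equivalent to the full derivation identity (so the converse comes for free) and that the vanishing first coordinate of an inner derivation at $(0,x)$ forces $T_A=0$ are precisely the points on which the statement rests.
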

The next result is the $(2n)-$version of Theorem \ref{wamenodd}.
\begin{theorem}\label{wameneven}
A generalized module extension Banach algebra $A\bowtie X$ is $(2n)-$weakly amenable if and only if
\begin{enumerate}[\hspace{1em}\rm (1)]
\item If $D_A:A\to A^{(2n)}$ is a derivation such that there exist a 
derivation $D_X:A\to X^{(2n)}$, a bounded linear operator $T_X:X\to X^{(2n)}$ and a bounded $A$-module
homomorphism $T_A:X\to A^{(2n)}$ satisfying $T_A(xy)=0$,
$T_X(a x)=D_A(a) x+D_X(a) x+a T_X(x),$
$T_X(x a)=x D_A(a)+x D_X(a)+T_X(x) a$ and
$T_X(xy)=x T_X(y)+T_X(x) y+T_A(x) y+x T_A(y)$, 
for all $a\in A$ and $x,y\in X$,
then $D_A$ is inner.
\item If $T_X:X\to X^{(2n)}$ is a bounded linear operator such that there exist a derivation
$D_X:A\to X^{(2n)}$ and a bounded $A$-module
homomorphism $T_A:X\to A^{(2n)}$ satisfying $T_A(xy)=0$,
$T_X(a x)=D_X(a) x+a T_X(x),$
$T_X(x a)=x D_X(a)+T_X(x) a$ and
$T_X(xy)=x T_X(y)+T_X(x) y+T_A(x) y+x T_A(y)$, 
for all $a\in A$ and $x,y\in X$, 
then there exist elements $F\in A^{(2n)}$ and $G\in X^{(2n)}$ such that 
$T_X=\delta_{F}+d_{G}$ and $a F=F a$ for all $a\in A$.
\item If $D_X:A\to X^{(2n)}$ is a derivation such that
$x D_X(a)=D_X(a) x=0$, in $X^{(2n)}$, for all $a\in A$ and $x\in X$, 
then there exist elements $G\in X^{(2n)}$ and $F\in A^{(2n)}$ such that
$D_X=d_{G},~ d_{G}+\delta_{F}=0$ and $a F=F a$ for all $a\in A$.
\item If $T_A:X\to A^{(2n)}$ is a bounded $A$-module homomorphism such that 
$T_A(xy)=0$ for all $x,y\in X$, and there exists a bounded $A$-module homomorphism $T_X:X\to X^{(2n)}$ satisfying
$T_X(xy)=x T_X(y)+T_X(x) y+T_A(x) y+x T_A(y)$, for all $x,y\in X$, then $T_A=0$.
\end{enumerate}
\end{theorem}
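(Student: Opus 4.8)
The plan is to reduce everything to Lemma~\ref{deriveven}, which identifies a derivation $D\colon A\bowtie X\to(A\bowtie X)^{(2n)}$ with a quadruple $(D_A,D_X,T_A,T_X)$ satisfying (a)--(c), and declares $D$ inner precisely when there exist $F\in A^{(2n)}$, $G\in X^{(2n)}$ with $D_A=d_F$, $D_X=d_G$, $T_X=d_G+\delta_F$ and $T_A=0$. Thus $(2n)$-weak amenability is equivalent to the single assertion that \emph{every} admissible quadruple arises from such a pair $(F,G)$, and the theorem merely repackages this into the four conditions (1)--(4). Throughout I will use that the decomposition of Lemma~\ref{deriveven} is linear in $D$ (its components are read off from $D(a,0)$ and $D(0,x)$), so that subtracting an inner derivation $d_{(F,G)}$ subtracts its components $(d_F,d_G,0,d_G+\delta_F)$.

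For the forward implication, assume $A\bowtie X$ is $(2n)$-weakly amenable and verify each of (1)--(4) by completing its hypotheses to a full admissible quadruple and then reading off the inner structure. For (1) the hypothesis is \emph{literally} conditions (a)--(c) for the quadruple $(D_A,D_X,T_A,T_X)$, so it defines a derivation $D$; innerness forces $D_A=d_F$. For (2), (3) and (4) one sets the unused components to zero---taking $D_A=0$ in (2); $D_A=0,T_X=0,T_A=0$ in (3); and $D_A=0,D_X=0$ in (4)---and checks that the relations of Lemma~\ref{deriveven} degenerate exactly to the stated hypotheses (here one uses $D_X(a)x=xD_X(a)=0$ in (3), and that $T_X$ is a module homomorphism in (4)). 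In each case the innerness clause of Lemma~\ref{deriveven} delivers the required conclusion, the condition $aF=Fa$ arising from $d_F=D_A=0$.

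The substantive direction is the converse. Starting from an arbitrary derivation $D$ with quadruple $(D_A,D_X,T_A,T_X)$, I would peel off the four pieces in the order (1),(2),(3),(4), at each stage subtracting an inner derivation so that the residual quadruple satisfies the hypotheses of the next condition. First, (1) gives $D_A=d_F$; replacing $D$ by $D-d_{(F,0)}$ makes the $A$-component vanish and changes $T_X$ into $T_X-\delta_F$, whose relations are exactly the hypotheses of (2). Applying (2) produces $F',G$ with $T_X-\delta_F=\delta_{F'}+d_G$ and $aF'=F'a$; since $d_{F'}=0$, writing $F_0=F+F'$ gives $D_A=d_{F_0}$ and $T_X=\delta_{F_0}+d_G$. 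Subtracting $d_{(F_0,G)}$ next leaves a derivation whose $A$- and $T_X$-parts vanish and whose $X$-derivation part $D_X-d_G$ satisfies $(D_X-d_G)(a)x=x(D_X-d_G)(a)=0$---the hypotheses of (3). Condition (3) yields $G_1,F_1$ with $D_X-d_G=d_{G_1}=-\delta_{F_1}$ and $aF_1=F_1a$; absorbing these into $F_*=F_0+F_1$ and $G_0=G+G_1$ (and using $d_{F_1}=0$, $d_{G_1}=-\delta_{F_1}$) gives simultaneously $D_A=d_{F_*}$, $D_X=d_{G_0}$ and $T_X=\delta_{F_*}+d_{G_0}$. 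Finally, $D-d_{(F_*,G_0)}$ has only its $T_A$-part surviving, and the vanishing of its $T_X$-part forces $xT_A(y)+T_A(x)y=0$; taking $T_X=0$ as witness, condition (4) gives $T_A=0$, whence $D=d_{(F_*,G_0)}$ is inner.

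The main obstacle is purely the bookkeeping in this last paragraph: one must track how the implementing elements accumulate through the three subtractions and verify, via the identities $d_{F'}=d_{F_1}=0$ and $d_{G_1}=-\delta_{F_1}$, that the \emph{same} pair $(F_*,G_0)$ simultaneously implements $D_A$, $D_X$ and $T_X$ before condition (4) is invoked. The only genuine content beyond this is the repeated observation that a difference of derivations is again a derivation, so that Lemma~\ref{deriveven} applies to each intermediate map with its components obtained by component-wise subtraction.
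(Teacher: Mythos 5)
Your proposal is correct and takes essentially the same route as the paper: both directions reduce to Lemma~\ref{deriveven}, specializing the quadruple $(D_A,D_X,T_A,T_X)$ with the unused components set to zero to obtain conditions (1)--(4) from weak amenability, and, for the converse, successively subtracting inner derivations in the order (1)--(4) while absorbing the implementing elements (using $d_{F'}=d_{F_1}=0$ on $A$ and $d_{G_1}+\delta_{F_1}=0$ on $X$) into a single pair $(F_*,G_0)$. The only point worth flagging is your implicit use of the converse of Lemma~\ref{deriveven} --- that any quadruple satisfying (a)--(c) defines a derivation via \eqref{der2} --- which is a routine direct verification that the paper's argument relies on as well.
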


As an application of the latter Theorem we bring the following even analogue of Proposition \ref{nweak-odd}.
\begin{proposition}\label{nweak-even}
Suppose that $X^2$ is dense in $X$ and both 
$A$ and $X$ are \break
$(2n)-$weakly amenable. 
If $\langle XX^{(2n-1)}+X^{(2n-1)}X\rangle$ is dense in $X^{(2n-1)}$,
then $A\bowtie X$ is $(2n)-$weakly amenable.
\end{proposition}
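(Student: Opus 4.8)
The plan is to verify, one at a time, the four conditions listed in Theorem~\ref{wameneven}, and to check that each is forced by exactly one of the standing hypotheses: the $(2n)$-weak amenability of $A$, the $(2n)$-weak amenability of $X$, the density of $X^2$ in $X$, or the density of $\langle XX^{(2n-1)}+X^{(2n-1)}X\rangle$ in $X^{(2n-1)}$. A useful preliminary observation is that in each of (1)--(4) the bulk of the compatibility relations among the auxiliary maps $D_X,T_X,T_A$ will turn out to be irrelevant; the required conclusions follow almost directly. Condition (1) is the quickest: $D_A$ is a derivation from $A$ into $A^{(2n)}$, and since $A$ is $(2n)$-weakly amenable every such derivation is inner, so $D_A$ is inner with no further work.

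Next I would dispose of the module homomorphism $T_A$ wherever it occurs. In both (2) and (4) the map $T_A\colon X\to A^{(2n)}$ is bounded and satisfies $T_A(xy)=0$ for all $x,y\in X$; since $X^2$ is dense in $X$, linearity and continuity immediately give $T_A=0$. In condition (4) this is the whole claim. In condition (2), once $T_A=0$ the relation $T_X(xy)=xT_X(y)+T_X(x)y+T_A(x)y+xT_A(y)$ collapses to $T_X(xy)=xT_X(y)+T_X(x)y$, so $T_X$ is a genuine derivation from the Banach algebra $X$ into $X^{(2n)}$. By the $(2n)$-weak amenability of $X$ we obtain $T_X=d_G$ for some $G\in X^{(2n)}$, and then the choice $F=0$ yields $T_X=\delta_F+d_G$ with $aF=Fa$ holding trivially, which is precisely the desired conclusion.

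For condition (3) I would bring in the second density hypothesis. Here $D_X(a)$ is an element of $X^{(2n)}=(X^{(2n-1)})^{*}$ subject to $xD_X(a)=D_X(a)x=0$ for every $x\in X$. Reading these through the module actions of $X$ on $X^{(2n)}$ recorded in the introduction, $xD_X(a)=0$ says that $D_X(a)$ annihilates $X^{(2n-1)}X$, while $D_X(a)x=0$ says it annihilates $XX^{(2n-1)}$; hence $D_X(a)$ annihilates the subspace $\langle XX^{(2n-1)}+X^{(2n-1)}X\rangle$. Since this subspace is dense in $X^{(2n-1)}$, we conclude $D_X(a)=0$ for all $a$, that is $D_X=0$, and condition (3) then holds with $F=G=0$.

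The only place demanding genuine care is the translation step in condition (3): one must unwind the bimodule operations of $(A\bowtie X)^{(2n)}$ correctly so that the two vanishing identities land on exactly the left and right product subspaces whose span is assumed dense. Everything else is either a one-line density/continuity argument or a direct invocation of the weak amenability of $A$ or of $X$. Having verified (1)--(4), the conclusion follows by applying Theorem~\ref{wameneven}.
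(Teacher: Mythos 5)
Your proof is correct and follows essentially the same route as the paper: Proposition~\ref{nweak-even} is proved there as a direct application of Theorem~\ref{wameneven}, verifying the four conditions exactly as you do --- $(2n)$-weak amenability of $A$ for (1), density of $X^2$ to force $T_A=0$ in (2) and (4), $(2n)$-weak amenability of $X$ to make $T_X$ inner with $F=0$ in (2), and the density of $\langle XX^{(2n-1)}+X^{(2n-1)}X\rangle$ together with the duality pairings $\langle D_X(a)x,h\rangle=\langle D_X(a),xh\rangle$ and $\langle xD_X(a),h\rangle=\langle D_X(a),hx\rangle$ to force $D_X=0$ in (3). Your handling of the one delicate point, translating $xD_X(a)=D_X(a)x=0$ into annihilation of the two product subspaces of $X^{(2n-1)}$, is accurate.
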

%

We apply Theorem \ref{wameneven} for a classical generalized module extension Banach algebra 
$A\ltimes X$ to obtain the following result of Zhang \cite{Z}.
\begin{corollary}[{\cite[Theorem 2.2]{Z}}]
A (classical) module extension Banach algebra $A\ltimes X$ is $(2n)-$weakly amenable if and only if the following conditions hold.
\begin{enumerate}[\hspace{1em}\rm (1)]
\item The only derivation $D:A \rightarrow A^{(2n)}$ for which there exists a 
continuous operator $T:X \rightarrow X^{(2n)}$ such that 
$T(ax)=aT(x)+D(a)x$ and $T(xa)=T(x)a+xD(a)$ for all $a\in A$ and $x\in X$, are inner derivations.
\item For every continuous $A-$module morphism $T:X\rightarrow X^{(2n)}$, there 
exists a $F\in A^{(2n)}$ such that $T(x)=xF-Fx$ and $aF=Fa$ for all $x\in X, a\in A$.
\item $H^1(A,X^{(2n)})=\{0\}$.
\item The only continuous $A-$module morphism $S:X\rightarrow A^{(2n)}$ 
for which $xS(y)+S(x)y=0$, in $X^{(2n)}$, for all $x,y\in X,$ is zero.
\end{enumerate}
\end{corollary}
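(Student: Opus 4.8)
The plan is to \emph{specialize} Theorem~\ref{wameneven} to the classical module extension $A\ltimes X$, which, as recorded in the introduction, is precisely the generalized module extension $A\bowtie X$ in which the product on $X$ is trivial, i.e. $xy=0$ for all $x,y\in X$. Under this specialization every bilinear product originating from the algebra multiplication of $X$ (and its extensions to the duals) vanishes; in particular $Gx=xG=0$ for $G\in X^{(2n)}$, and $D_X(a)x=xD_X(a)=0$, $T_X(x)y=xT_X(y)=0$, whereas the left and right module actions of $A$ on $X$, and of $A^{(2n)}$ on $X$, all survive. I would first record these vanishing identities, since they drive every subsequent simplification, and then show that each of the four conditions of Theorem~\ref{wameneven} collapses to the corresponding condition of the corollary.

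For condition~(1), the terms $D_X(a)x$ and $xD_X(a)$ drop out and the relation $T_X(xy)=\cdots$ becomes $T_A(x)y+xT_A(y)=0$; choosing $D_X=0$ and $T_A=0$ makes the surviving relations read $T_X(ax)=aT_X(x)+D_A(a)x$ and $T_X(xa)=T_X(x)a+xD_A(a)$, which are exactly the relations binding $T$ to $D$ in condition~(1) of the corollary. Conversely, from any data satisfying the hypotheses of Theorem~\ref{wameneven}(1) one simply reads off $T=T_X$. Thus the two premises coincide and both conclude that $D_A$ is inner. For condition~(3), observe that with $xy=0$ the hypothesis $xD_X(a)=D_X(a)x=0$ holds \emph{automatically}, so it applies to every derivation $D_X\colon A\to X^{(2n)}$; moreover $d_G$ acts trivially on $X$, so the requirement $d_G+\delta_F=0$ reduces to $\delta_F=0$, which is met by $F=0$. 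What remains is exactly ``$D_X=d_G$ for some $G$'', i.e. $H^1(A,X^{(2n)})=\{0\}$, the corollary's condition~(3).

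For condition~(2), taking $D_X=0$ and $T_A=0$ reduces its hypotheses to ``$T_X$ is a bounded $A$-bimodule morphism'', and since $d_G$ vanishes on $X$ the asserted form $T_X=\delta_F+d_G$ becomes $T_X(x)=xF-Fx$ together with $aF=Fa$, which is verbatim condition~(2) of the corollary. For condition~(4), the relation $T_X(xy)=\cdots$ again collapses to $T_A(x)y+xT_A(y)=0$, a constraint independent of $T_X$ (so $T_X=0$ witnesses its solvability), leaving precisely the statement that an $A$-module morphism $S=T_A\colon X\to A^{(2n)}$ with $xS(y)+S(x)y=0$ must vanish, the corollary's condition~(4). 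Assembling these four equivalences gives the result.

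The only genuine work, and the point where care is required, is the \emph{bookkeeping} of which of the many dual module products survive the passage $xy=0$: one must keep scrupulously separate the (vanishing) multiplication of $X$ together with its dual extensions from the (persisting) actions of $A$, since the symbols $d_G$, $\delta_F$, and products such as $T_A(x)y$ are overloaded across the two structures. Beyond this careful tracking, each reduction amounts to the observation that the auxiliary maps $D_X,T_A$ (respectively the element $F$) may be taken to be zero without weakening the equivalence, so no analytic difficulty arises.
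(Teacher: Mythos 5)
Your proposal is correct and is exactly the paper's approach: the paper's entire proof is the one-line remark that it suffices to apply Theorem~\ref{wameneven} in the case $xy=0$ for all $x,y\in X$. You have merely (and accurately) spelled out the bookkeeping the paper leaves implicit --- that the $X$-multiplication-derived products and $d_G$ on $X$ vanish, that the auxiliary witnesses $D_X$, $T_A$ (resp.\ $T_X$, $F$) may be taken to be zero in each condition, and that the four conditions of Theorem~\ref{wameneven} thereby collapse verbatim to Zhang's four conditions.
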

\begin{proof}
It is enough to use Theorem \ref{wameneven} for the case where $xy=0$ for all $x,y\in X.$
\end{proof}

Applying Theorem \ref{wameneven} for the $\theta-$Lau product Banach algebra $A{~}_{\theta\!\!}\times B,$ 
we get the following characterization for $(2n)-$weak amenability of $A{~}_{\theta\!\!}\times B$ 
which extends the related results in \cite{EK1}. 
\begin{corollary}\label{Laueven}
A $\theta$-Lau product Banach algebra $A{~}_{\theta\!\!}\times B$ is $(2n)-$weakly amenable if and only if the following conitions hold.
\begin{enumerate}[\hspace{1em}\rm (1)]
\item The only derivations $D:A\to A^{(2n)}$ for which there is a
derivation $D_1:A\to B^{(2n)}$ such that
$D_1(a) b=b D_1(a)= -D(a)(\theta)b$ for all $a\in A$ and $b\in B,$
are inner derivations.
\item $B$ is $(2n)-$weakly amenable.
\item The only bounded linear operator $D_1:A\to B^{(2n)}$ such that
$D(ac)=\theta(a)D(c)+\theta(c)D(a)$ and
$bD_1(a)=0=D_1(a)b$ for all $a,c\in A, b\in B,$
is zero.
\item The only bounded linear operator $S:B\to A^{(2n)}$ for which
$S(bd)=0, aS(b)=S(b)a=\theta(a)S(b)$ for all $a\in A, b,d\in B$ and there is a 
bounded linear operator $T:B\to B^{(2n)}$ such that
$T(bd)=b T(d)+T(b) d+S(b)(\theta) d+S(d)(\theta) b$ for all $b,d\in B,$ is zero.
\end{enumerate}
\end{corollary}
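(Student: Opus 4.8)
The plan is to specialize Theorem \ref{wameneven} to the generalized module extension in which $X=B$ carries the module operations $a\cdot b=b\cdot a=\theta(a)b$, so that $A\bowtie X=A{~}_{\theta\!\!}\times B$, and then to translate each of its four conditions into the language of $A$, $B$ and $\theta$. Since the abstract equivalence ``$A\bowtie X$ is $(2n)$-weakly amenable $\iff$ conditions (1)--(4) of Theorem \ref{wameneven}'' is already available, the only genuine work is the bookkeeping of the various module actions on the duals, so the first step is to record these explicitly.

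Because the left and right actions of $A$ on $B$ are both multiplication by the scalar $\theta(a)$, an easy induction on the number of duals shows that $A$ acts on every dual $B^{(k)}$ by $a\cdot\xi=\xi\cdot a=\theta(a)\xi$. Likewise, the cross action of $A^{(2n)}$ on $B$ appearing in the module table of $(A\bowtie X)^{(2n)}$ (the terms $Fx$ and $xF$) is the Arens extension of the bilinear map $(a,b)\mapsto\theta(a)b$, and hence reduces to $Fx=xF=F(\theta)\hat{x}$, where $\hat{x}\in B^{(2n)}$ is the canonical image and $F(\theta)$ denotes the pairing of $F\in A^{(2n)}$ with the canonical image of $\theta$ in $A^{(2n-1)}$. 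Two consequences are immediate and drive everything: the map $\delta_F(x)=xF-Fx$ vanishes identically, and an inner derivation $d_G\colon A\to B^{(2n)}$ (for $G\in B^{(2n)}$) is zero since $a\cdot G-G\cdot a=\theta(a)G-\theta(a)G=0$, whereas $d_G\colon B\to B^{(2n)}$ is the usual inner derivation for the internal $B$-bimodule structure.

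Next I would substitute these formulae into conditions (1)--(4) of Theorem \ref{wameneven}. In every relation of the form $T_X(ax)=\cdots$ the terms $T_X(ax)=\theta(a)T_X(x)$ and $aT_X(x)=\theta(a)T_X(x)$ cancel, collapsing the two module relations of Theorem \ref{wameneven}(1) to $D_X(a)b=bD_X(a)=-D_A(a)(\theta)b$; since one may then take $T_X=T_A=0$ to satisfy the remaining relations, the hypothesis of Theorem \ref{wameneven}(1) is equivalent to that of condition~(1) here, while its conclusion ($D_A$ inner) is unchanged. Conditions (3) and (4) translate verbatim: in (3) the derivation property of $D_X\colon A\to B^{(2n)}$ becomes $D_X(ac)=\theta(a)D_X(c)+\theta(c)D_X(a)$, and the conclusion $D_X=d_G$ forces $D_X=0$ because $d_G\colon A\to B^{(2n)}$ vanishes (and $\delta_F=0$); in (4) the correction terms $T_A(x)y+xT_A(y)$ become $S(b)(\theta)\hat d+S(d)(\theta)\hat b$, giving exactly the statement of condition~(4).

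The one nontrivial step is condition~(2): I must reconcile the near-derivation hypothesis of Theorem \ref{wameneven}(2), in which $T_X$ satisfies $T_X(bd)=bT_X(d)+T_X(b)d+S(b)(\theta)\hat d+S(d)(\theta)\hat b$, with the clean assertion that $B$ is $(2n)$-weakly amenable. The key observation is that the accompanying operator $T_X\colon B\to B^{(2n)}$ is automatically an $A$-module homomorphism (the action being through $\theta$) and therefore plays precisely the role of the operator $T$ in condition~(4); invoking condition~(4) forces $S=0$, after which $T_X$ is a genuine derivation $B\to B^{(2n)}$, and the conclusion $T_X=\delta_F+d_G=d_G$ (using $\delta_F=0$) says exactly that $T_X$ is inner. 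The converse—that $(2n)$-weak amenability of $A{~}_{\theta\!\!}\times B$ forces $B$ to be $(2n)$-weakly amenable—follows by feeding an arbitrary derivation $T_X\colon B\to B^{(2n)}$ into Theorem \ref{wameneven}(2) with $D_X=0$ and $T_A=0$, whereupon all relations hold and the conclusion yields $T_X=d_G$. Thus the interdependence of the four conditions, and in particular the use of condition~(4) to suppress the $S$-correction inside condition~(2), is where the real content lies; everything else is routine once the action formulae $a\cdot\xi=\theta(a)\xi$ and $Fx=xF=F(\theta)\hat{x}$ are established.
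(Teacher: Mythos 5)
Your proposal is correct and follows exactly the route the paper intends: the corollary is stated as a direct specialization of Theorem \ref{wameneven} to $X=B$ with module actions $a\cdot b=b\cdot a=\theta(a)b$, using the standard formulas $a\cdot\xi=\theta(a)\xi$ on $B^{(k)}$ and $Fx=xF=F(\theta)\hat{x}$, which make $\delta_F=0$ and kill inner derivations $A\to B^{(2n)}$. Your handling of the only delicate point --- that the hypothesis of Theorem \ref{wameneven}(2) only matches ``$B$ is $(2n)$-weakly amenable'' after invoking condition (4) to force $S=0$, with the converse obtained by taking $D_X=0$, $T_A=0$ --- is precisely the bookkeeping the paper leaves implicit, and it is carried out correctly.
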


Applying Theorem \ref{wameneven} for the case $X=A$, with the multiplication as module operation, we 
get the following characterization of $(2n)-$weak 
amenability of $A\bowtie A$.
\begin{corollary}\label{B=Aeven}
$A\bowtie A$ is $(2n)-$weakly amenable if and only if
\begin{enumerate}[\hspace{1em}\rm (1)]
\item $A$ is $(2n)-$weakly amenable.
\item the only bounded homomorphism $S:A\to A^{(2n)}$ for which
$aS(c)=S(a)c=S(ac)=0$ for each $a, c\in A$, is zero.
\end{enumerate}
\end{corollary}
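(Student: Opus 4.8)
The plan is to specialize Theorem \ref{wameneven} to the case $X=A$, where the module operations are the multiplication of $A$. In this situation $X^{(2n)}=A^{(2n)}$, every module action occurring in the theorem is just the (Arens) action of $A$ on $A^{(2n)}$, and the four auxiliary maps $D_A,D_X,T_A,T_X$ all take values in $A^{(2n)}$. I would then show that conditions (1)--(4) of Theorem \ref{wameneven} collapse to the two conditions of the corollary: (1) yields the $(2n)$-weak amenability of $A$, (4) yields the statement about $S$, and (2),(3) turn out to be redundant. Throughout I read the ``bounded homomorphism'' $S$ of the corollary as a bounded $A$-bimodule homomorphism, matching the hypothesis on $T_A$ in the theorem.

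First I would dispose of conditions (1) and (4). For (1), given any derivation $D_A\colon A\to A^{(2n)}$, the choices $D_X=0$, $T_A=0$ and $T_X=D_A$ satisfy all the compatibility relations, since each relation then reduces to the Leibniz rule for $D_A$; hence condition (1) holds precisely when every derivation $A\to A^{(2n)}$ is inner, i.e. when $A$ is $(2n)$-weakly amenable. For (4), a bounded $A$-bimodule homomorphism $T_A$ with $T_A(xy)=0$ automatically satisfies $a\,T_A(c)=T_A(a)\,c=T_A(ac)=0$, and $T_X=0$ is always an admissible companion (the relation reads $0=T_A(x)\,y+x\,T_A(y)$, and each summand equals $T_A(xy)=0$). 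Thus condition (4) asserts exactly that such a homomorphism must vanish, which is condition (2) of the corollary with $S=T_A$.

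Next I would show that, assuming the two conditions of the corollary, conditions (2) and (3) are automatic. For (3), the hypotheses $x\,D_X(a)=D_X(a)\,x=0$ make both Leibniz cross-terms vanish, so $D_X$ is in fact a bounded $A$-bimodule homomorphism with $D_X(ac)=0$; condition (2) of the corollary then forces $D_X=0$, which is precisely the conclusion of (3) in this setting (take $F=G=0$, noting that $aF=Fa$ makes $\delta_F=0$ and hence $d_G=0$, so the stated conclusion is equivalent to $D_X=0$). For (2), the decisive observation is that, because $T_X$ and $T_A$ now share the codomain $A^{(2n)}$, the sum $U:=T_X+T_A$ is defined, and the multiplicativity relation $T_X(xy)=x\,T_X(y)+T_X(x)\,y+T_A(x)\,y+x\,T_A(y)$ together with $T_A(xy)=0$ shows that $U$ is a derivation $A\to A^{(2n)}$. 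By $(2n)$-weak amenability of $A$ it is inner, while $T_A=0$ by condition (2) of the corollary, so $T_X=U$ is inner, which is exactly the conclusion of (2) (again with $F=0$).

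The main obstacle is the bookkeeping needed to see that conditions (2) and (3) are redundant rather than genuine extra constraints. The two ideas that make this work are the identification of a doubly-annihilated derivation as a bimodule homomorphism, which lets condition (2) of the corollary be invoked in the proof of (3), and the fact that $T_X+T_A$ is a genuine derivation in the $X=A$ setting, which lets the $(2n)$-weak amenability of $A$ be invoked in the proof of (2). Once these are in place, together with the trivial reductions of the inner-ness conclusions via $aF=Fa\Rightarrow\delta_F=0$, the equivalence with Theorem \ref{wameneven} is immediate.
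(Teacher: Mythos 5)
Your proposal is correct and follows exactly the route the paper intends: the corollary is stated as a direct specialization of Theorem \ref{wameneven} to $X=A$ with multiplication as the module operations, and your verifications (companions $D_X=0$, $T_A=0$, $T_X=D_A$ for condition (1); $T_X=0$ for condition (4); the observation that $aF=Fa$ forces $\delta_F=0$ when $X=A$; and the reduction of conditions (2) and (3) via the derivation $T_X+T_A$ and the doubly-annihilated $D_X$) are precisely the routine checks the paper leaves implicit. No gaps.
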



Applying Theorem \ref{wameneven} for the direct sum algebra $A\oplus_1 B$ 
we get the following result.
\begin{corollary}\label{zeromodeven}
The direct sum $A\oplus_1 B$ of two Banach algebra $A$ and $B$ is $(2n)-$weakly amenable if and only if
\begin{enumerate}[\hspace{1em}\rm (1)]
\item Both $A$ and $B$ are $(2n)-$weakly amenable.
\item The only bounded homomorphism $D:A\to B^{(2n)}$ for which
$D(ac)=0$ and $b D(a)=D(a) b=0$ for all $a,c\in A, b\in B,$ is zero.
\item The only bounded homomorphism $S:B\to A^{(2n)}$ for which
$S(bd)=0$ and $a S(b)=D(b) a=0$ for all $a\in A, b,d\in B,$ is zero.
\end{enumerate}
\end{corollary}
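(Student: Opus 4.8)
The plan is to realize $A\oplus_1 B$ as the generalized module extension $A\bowtie X$ in which $X=B$ carries the \emph{trivial} $A$-module action $ax=xa=0$ (this is exactly the $T=0$ case of the $T$-Lau product noted after the list of examples), and then to specialize Theorem \ref{wameneven}. The whole argument is a matter of substituting $ax=xa=0$ into the four conditions of that theorem and reading off what survives; no new estimates are required.

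First I would record the effect of the trivial action on all induced products. Since the $A$-action on $X$ is zero, every product pairing an $A$-side factor with an $X$-side factor vanishes on all duals: $aG=Ga=0$, $Fx=xF=0$ and $FG=GF=0$ for $a\in A$, $x\in X$, $F\in A^{(2n)}$, $G\in X^{(2n)}$. By contrast, the intrinsic multiplication of $X=B$ and its canonical actions on $X^{(2n)}$ (the products $xy$, $xG$, $Gx$), as well as the canonical $A$-action on $A^{(2n)}$, are untouched. Two consequences are crucial: the map $\delta_F(x)=xF-Fx$ is identically zero, and every inner derivation $d_G:A\to X^{(2n)}$, $d_G(a)=aG-Ga$, is also identically zero. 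Thus $T_X=\delta_F+d_G$ collapses to $T_X=d_G$ (genuine innerness in $B$), while $D_X=d_G$ collapses to $D_X=0$.

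Next I would simplify the hypotheses of each condition. In each of (1)--(4) the auxiliary maps whose existence is postulated may always be taken to be zero: with $ax=xa=0$ the constraints linking $D_X$, $T_X$, $T_A$ decouple, the cross-terms $D_A(a)x$, $aT_X(x)$, $T_A(x)y$, $xT_A(y)$ all drop out, and the remaining relations $D_X(a)x=xD_X(a)=0$, $T_A(xy)=0$ and the derivation identity $T_X(xy)=xT_X(y)+T_X(x)y$ are met trivially by the zero maps. Consequently conditions (1) and (2) of Theorem \ref{wameneven} reduce, respectively, to ``every derivation $A\to A^{(2n)}$ is inner'' and ``every derivation $B\to B^{(2n)}$ is inner'', that is, to the $(2n)$-weak amenability of $A$ and of $B$, which is item (1) of the corollary. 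Condition (3), after using $d_G\equiv 0$, becomes the statement that a derivation $D:A\to B^{(2n)}$ into the trivial module (equivalently a bounded linear map with $D(ac)=0$) satisfying $bD(a)=D(a)b=0$ must vanish, which is item (2). Finally, condition (4), noting that an $A$-module homomorphism $S=T_A:B\to A^{(2n)}$ automatically satisfies $aS(b)=S(b)a=0$ (because $a\cdot b=0$ forces $a\cdot S(b)=S(a\cdot b)=0$) and that the companion $T_X$ may be taken zero, becomes item (3).

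The only point demanding care, and the place where a careless reading of Theorem \ref{wameneven} would go wrong, is the bookkeeping of \emph{which} products vanish: one must consistently distinguish the trivial $A$-action on $X$ and its duals (which kills $\delta_F$, $d_G$, and every cross-term) from the genuinely nonzero $B$-multiplication on $X^{(2n)}$ and the genuinely nonzero canonical $A$-action on $A^{(2n)}$ (which is what keeps the weak-amenability hypotheses on $A$ and $B$ alive). Once this dictionary is fixed, matching the four reduced conditions to items (1)--(3) of the corollary is immediate. I would also flag that the symbol $D(b)a$ in item (3) should read $S(b)a$.
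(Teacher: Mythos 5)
Your proposal is correct and is precisely the argument the paper intends: the paper derives Corollary \ref{zeromodeven} by applying Theorem \ref{wameneven} to $A\bowtie X$ with $X=B$ carrying the trivial $A$-module action (the $T=0$ case of the $T$-Lau product), and your substitution bookkeeping—$\delta_F\equiv 0$, $d_G\equiv 0$ on $A$, cross-terms vanishing, auxiliary maps taken to be zero—reduces conditions (1)--(4) of that theorem to items (1)--(3) exactly as the paper does. Your observation that $D(b)a$ in item (3) should read $S(b)a$ is also correct.
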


If we combine Propositions \ref{nweak-odd} and \ref{nweak-even},
we have the following result providing some sufficient conditions 
for $n-$weak amenability of $A\bowtie X$. In particular, in the setting of 
 $\theta-$Lau products, it improves \cite[Proposition 2.4]{EK1}, as well as, 
for $T-$Lau products, it improve \cite[Proposition 3.5]{Javan-N}.
\begin{proposition}\label{nweak}
Suppose that $X^2$ is dense in $X$ and for some $n>0$ either
$\overline{X X^{(n-1)}}=X^{(n-1)}$ or $\overline{X^{(n-1)} X}=X^{(n-1)}$.
If $A$ and $X$ are $n-$weakly amenable
then $A\bowtie X$ is $n-$weakly amenable.
\end{proposition}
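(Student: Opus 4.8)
The plan is to reduce the statement to the two parity-specific results already established, namely Proposition~\ref{nweak-odd} for odd exponents and Proposition~\ref{nweak-even} for even exponents, and to carry out a simple case analysis on $n$. The only genuinely new observation needed is that denseness of a single one-sided product forces denseness of the whole symmetric span.

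First I would treat the odd case. Write $n=2m+1$ with $m\in\mathbb N\cup\{0\}$, so that $n-1=2m$. By hypothesis either $\overline{XX^{(2m)}}=X^{(2m)}$ or $\overline{X^{(2m)}X}=X^{(2m)}$. Since $XX^{(2m)}$ and $X^{(2m)}X$ both sit inside the closed linear span $\langle XX^{(2m)}+X^{(2m)}X\rangle$, either alternative already yields that this span is dense in $X^{(2m)}$; that is, condition~(1) of Proposition~\ref{nweak-odd} holds. As $A$ and $X$ are assumed $(2m+1)$-weakly amenable, Proposition~\ref{nweak-odd} immediately gives that $A\bowtie X$ is $(2m+1)$-weakly amenable. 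Note that the hypothesis that $X^2$ is dense in $X$ plays no role in this case.

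Next I would treat the even case. Write $n=2m$ with $m\in\mathbb N$, so that $n-1=2m-1$. Again the hypothesis gives $\overline{XX^{(2m-1)}}=X^{(2m-1)}$ or $\overline{X^{(2m-1)}X}=X^{(2m-1)}$, and exactly as before each alternative makes the closed span $\langle XX^{(2m-1)}+X^{(2m-1)}X\rangle$ dense in $X^{(2m-1)}$. Here the standing hypothesis that $X^2$ is dense in $X$ is precisely the extra assumption required by Proposition~\ref{nweak-even}. Since $A$ and $X$ are $(2m)$-weakly amenable, an application of Proposition~\ref{nweak-even} yields the $(2m)$-weak amenability of $A\bowtie X$, completing this case and hence the proof.

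I expect no serious obstacle: the substantive content lives entirely inside Propositions~\ref{nweak-odd} and \ref{nweak-even}, whose proofs in turn rest on the structural descriptions of derivations in Lemmas~\ref{derivodd} and \ref{deriveven}. The only points demanding care are the bookkeeping of the dual indices when matching $n-1$ to the exponent $2m$ or $2m-1$ appearing in the earlier statements, and the elementary remark that one-sided denseness implies denseness of the symmetric span.
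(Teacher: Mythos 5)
Your proposal is correct and takes essentially the same route as the paper, which obtains Proposition~\ref{nweak} precisely by combining Propositions~\ref{nweak-odd} and~\ref{nweak-even} via the parity split on $n$, together with your (valid) elementary observation that denseness of either one-sided product $XX^{(n-1)}$ or $X^{(n-1)}X$ already forces denseness of the closed span $\langle XX^{(n-1)}+X^{(n-1)}X\rangle$. Your side remark that the density of $X^2$ is used only in the even case is likewise consistent with the hypotheses of the two cited propositions.
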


We recall from \cite[Proposition 1.2]{DGG} that if $A$ is weakly amenable, then $\overline{AA}=A$. 
Thus
as a rapid consequence of Proposition \ref{nweak} we get,
\begin{corollary}
If $A$ and $X$ are weakly amenable then $A\bowtie X$ is weakly amenable.
\end{corollary}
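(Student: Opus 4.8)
The plan is to derive this corollary as a direct specialization of Proposition \ref{nweak} to the case of weak amenability, that is, $n=1$. First I would note that for weak amenability the relevant hypotheses of Proposition \ref{nweak} involve the module $X^{(n-1)}=X^{(0)}=X$ itself, so the density conditions $\overline{XX^{(n-1)}}=X^{(n-1)}$ and $\overline{X^{(n-1)}X}=X^{(n-1)}$ both read $\overline{XX}=X$ and $\overline{X^2}=X$. The strategy is therefore to verify that the two standing assumptions of Proposition \ref{nweak} ($X^2$ dense in $X$, and one of the one-sided density conditions on $X^{(n-1)}$) are automatically met once $A$ and $X$ are weakly amenable, so that the conclusion follows immediately.

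The key step is to invoke the recalled fact from \cite[Proposition 1.2]{DGG}: if an algebra is weakly amenable then its square is dense. Applying this to $X$ (which is weakly amenable by hypothesis) gives $\overline{X^2}=\overline{XX}=X$. This single observation simultaneously supplies the density of $X^2$ in $X$ \emph{and} the density of $XX^{(n-1)}=XX$ in $X^{(n-1)}=X$ required in Proposition \ref{nweak} with $n=1$. With both preconditions verified, and with $A$ and $X$ assumed $1$-weakly amenable, Proposition \ref{nweak} yields directly that $A\bowtie X$ is $1$-weakly amenable, i.e. weakly amenable.

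I do not anticipate any genuine obstacle here, since the corollary is an immediate reading of Proposition \ref{nweak} at $n=1$ once the density hypotheses are disposed of by the $\overline{XX}=X$ fact. The only point requiring a small amount of care is the bookkeeping of indices: one must be sure that the $n-1=0$ case of the $n$-weak amenability machinery is permitted (the lemmas are stated for $n\in\mathbb{N}\cup\{0\}$, so the degenerate dual $X^{(0)}=X$ is covered) and that the density condition on $X^{(n-1)}$ indeed collapses to the already-available condition $\overline{XX}=X$ rather than to some genuinely stronger requirement. Once this identification is made explicit, the proof is a one-line citation of Proposition \ref{nweak} together with \cite[Proposition 1.2]{DGG}.
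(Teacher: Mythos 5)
Your proposal is correct and matches the paper's own argument exactly: the paper likewise recalls from \cite[Proposition 1.2]{DGG} that weak amenability of $X$ forces $\overline{X^2}=X$, and then reads off the corollary as the $n=1$ case of Proposition \ref{nweak}, where $X^{(0)}=X$ makes both density hypotheses collapse to $\overline{XX}=X$. Your extra care about the $n-1=0$ bookkeeping is sound and raises no issue beyond what the paper tacitly assumes.
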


From Corollaries \ref{zeromododd}, \ref{zeromodeven} and Proposition \ref{nweak} we get the following result.
\begin{corollary}
Suppose that $B^2$ is dense in $B$ and for some $n>0$ either
$\overline{B B^{(n-1)}}=B^{(n-1)}$ or $\overline{B^{(n-1)} B}=B^{(n-1)}$. Then
$A\oplus_1 B$ is $n-$weakly amenable if and only if
both $A$ and $B$ are $n-$weakly amenable.
\end{corollary}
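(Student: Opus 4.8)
The plan is to derive this corollary as a direct consequence of the two characterizations already established for the direct sum $A\oplus_1 B$, namely Corollaries \ref{zeromododd} and \ref{zeromodeven}, together with the sufficiency in Proposition \ref{nweak}. The statement to be proved is a biconditional, so I would treat the two implications separately. For the forward direction, suppose $A\oplus_1 B$ is $n$-weakly amenable. The necessity of both $A$ and $B$ being $n$-weakly amenable is already contained in Corollaries \ref{zeromododd} and \ref{zeromodeven}: when $n=2m+1$ is odd, Corollary \ref{zeromododd} gives this immediately as an equivalence, and when $n=2m$ is even, condition (1) of Corollary \ref{zeromodeven} asserts exactly that both $A$ and $B$ are $(2n)$-weakly amenable. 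Thus the forward implication requires no new work; I would simply cite the appropriate corollary according to the parity of $n$.

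For the reverse direction, assume both $A$ and $B$ are $n$-weakly amenable and that the density hypotheses hold: $B^2$ is dense in $B$ and for some $n>0$ either $\overline{B B^{(n-1)}}=B^{(n-1)}$ or $\overline{B^{(n-1)}B}=B^{(n-1)}$. Here the key observation is that the direct sum $A\oplus_1 B$ is precisely the generalized module extension Banach algebra $A\bowtie B$ in the special case where the module operations are trivial, that is, $ab=ba=0$ and $B$ carries its own internal product. Under this identification, $B$ plays the role of the algebraic Banach $A$-module $X$ in the general framework. I would therefore verify that the hypotheses of Proposition \ref{nweak} are met with $X=B$: the condition ``$X^2$ dense in $X$'' becomes ``$B^2$ dense in $B$'', and the condition ``either $\overline{X X^{(n-1)}}=X^{(n-1)}$ or $\overline{X^{(n-1)}X}=X^{(n-1)}$'' becomes exactly the stated hypothesis on $B$. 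Since $A$ and $X=B$ are both $n$-weakly amenable by assumption, Proposition \ref{nweak} yields that $A\bowtie B = A\oplus_1 B$ is $n$-weakly amenable.

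The only point demanding genuine care is the identification of $A\oplus_1 B$ as an instance of $A\bowtie X$ with trivial module action, and confirming that the abstract module-theoretic density conditions of Proposition \ref{nweak} translate correctly into the concrete conditions on $B$. This is where I expect the sole subtlety to lie: one must check that, with the zero module operations $ab=ba=0$, the ambient products $X X^{(n-1)}$ and $X^{(n-1)}X$ appearing in Proposition \ref{nweak} are computed using $B$'s own multiplication (not the trivial $A$-action), so that they reduce to the genuine internal convolution-type products on $B$ rather than collapsing to zero. Once this identification is made transparent, no further computation is needed.

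\begin{proof}
We regard the direct sum $A\oplus_1 B$ as the generalized module extension Banach algebra $A\bowtie B$ in which the module operations of $A$ on $B$ are trivial, namely $ab=ba=0$ for all $a\in A$ and $b\in B$, while $B$ retains its own product. If $A\oplus_1 B$ is $n$-weakly amenable, then Corollary \ref{zeromododd} (for $n$ odd) and condition (1) of Corollary \ref{zeromodeven} (for $n$ even) show that both $A$ and $B$ are $n$-weakly amenable. Conversely, assume $A$ and $B$ are $n$-weakly amenable, and that $B^2$ is dense in $B$ with either $\overline{B B^{(n-1)}}=B^{(n-1)}$ or $\overline{B^{(n-1)}B}=B^{(n-1)}$. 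Taking $X=B$ in Proposition \ref{nweak}, the hypotheses there hold verbatim, so $A\bowtie B = A\oplus_1 B$ is $n$-weakly amenable.
\end{proof}
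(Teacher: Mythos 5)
Your proposal is correct and follows exactly the route the paper intends: the paper states this corollary follows ``From Corollaries \ref{zeromododd}, \ref{zeromodeven} and Proposition \ref{nweak}'', which is precisely your argument --- necessity from the two parity-specific characterizations of $A\oplus_1 B$, sufficiency from Proposition \ref{nweak} applied with $X=B$ carrying the trivial $A$-module actions. Your added care in checking that $X X^{(n-1)}$ and $X^{(n-1)}X$ are computed via $B$'s own multiplication (so the density hypotheses transfer verbatim) is the only substantive verification the paper leaves implicit, and you handle it correctly.
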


From Corollaries \ref{B=Aodd}, \ref{B=Aeven} we immediately obtain the next result.
\begin{corollary}
Suppose that $A^2$ is dense in $A$ then $A\bowtie A$ is $n-$weakly amenable if and only if
$A$ is $n-$weakly amenable.
\end{corollary}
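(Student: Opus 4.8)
The plan is to prove both implications at once by splitting on the parity of $n$ and invoking the two previously established characterizations, Corollaries \ref{B=Aodd} and \ref{B=Aeven}. The only substantive content is to recognize how the hypothesis that $A^2$ is dense in $A$ eliminates the single extra condition that distinguishes the even case from the naive equivalence.

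First I would dispose of the odd case. If $n$ is odd, Corollary \ref{B=Aodd} asserts outright that $A\bowtie A$ is $n$-weakly amenable precisely when $A$ is, with no auxiliary hypotheses. In particular the density assumption is not even needed here, and the desired equivalence is immediate.

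Next I would treat the even case, writing $n=2m$. Here Corollary \ref{B=Aeven} (applied with its internal parameter equal to $m$) tells us that $A\bowtie A$ is $n$-weakly amenable if and only if (1) $A$ is $n$-weakly amenable and (2) every bounded homomorphism $S:A\to A^{(2m)}$ satisfying $aS(c)=S(a)c=S(ac)=0$ for all $a,c\in A$ is zero. Condition (1) is exactly the property we wish to characterize, so the whole task reduces to checking that condition (2) holds automatically under the density hypothesis. This is where the assumption enters: the requirement $S(ac)=0$ for all $a,c\in A$ says precisely that $S$ annihilates every product, hence annihilates the linear span $A^2$; since $S$ is continuous and $A^2$ is dense in $A$, it follows that $S\equiv 0$. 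Thus (2) is vacuously satisfied and the equivalence collapses to (1), namely to the $n$-weak amenability of $A$.

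The main obstacle is really one of bookkeeping rather than difficulty. One must be careful to note that the conclusion $S=0$ uses nothing beyond continuity and the density of $A^2$ — only the relation $S(ac)=0$ is invoked, while the relations $aS(c)=S(a)c=0$ play no role — and that the phrase ``$A^2$ dense'' is read as the (closed) linear span of products being all of $A$, so that vanishing on products does yield vanishing on the dense span. Combining the odd and even cases then gives the stated equivalence for every $n\in\mathbb N$.
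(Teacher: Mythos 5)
Your proposal is correct and takes essentially the same route as the paper, which derives this corollary directly from Corollaries \ref{B=Aodd} and \ref{B=Aeven}; in the even case the density of $A^2$ together with continuity of $S$ forces $S=0$ from the relation $S(ac)=0$, exactly as you argue. No gaps.
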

\section{The case where $X$ is unital}
In this section we assume that $X$ is unital with the identity $1_X$.
In this case the characterizations of derivations $D:A\bowtie X\to(A\bowtie X)^{(n)}$ presented in Lemma
\ref{derivodd} and \ref{deriveven} can be considerably simplified. This result extends
\cite[Corollareis 3.9, 3.10]{Javan-N} and \cite[Proposition 3.1]{EK1}.
\begin{lemma}
Let $X$ be unital with the identity $1_X$. Then
\begin{enumerate}[\hspace{1em}\rm (1)]
\item $D:A\bowtie X\to (A\bowtie X)^{(2n+1)}$ is a derivation if and only if
$$D(a,x)=(D_{A}(a)+T_{X}(x) 1_X,T_{X}(a 1_X)+T_{X}(x)),\qquad(a\in A, x\in X);$$
where,
$D_{A}:A\to A^{(2n+1)}$ and $T_{X}:X\to X^{(2n+1)}$ are derivations and
$T_{X}(1_X a)=T_{X}(a 1_X)$ for all $a\in A$ and 
$T_{X}(x) 1_X=1_X T_{X}(x)$, in $A^{(2n+1)}$, for all $x\in X$.
Moreover, $D=d_{(f,g)}$ is inner derivation
if and only if $D_{A}=d_{f}$ and
$T_{X}=d_{g}$ are inner derivations.
\item $D:A\bowtie X\to (A\bowtie X)^{(2n)}$ is a derivation if and only if
$$D(a,x)=(D_{A}(a),T_{X}(a 1_X)-D_A(a) 1_X+T_{X}(x)),\qquad(a\in A, x\in X);$$
where,
$D_{A}:A\to A^{(2n)}$ and $T_{X}:X\to X^{(2n)}$ are derivations,
$T_{X}(1_X a)-1_X D_A(a)=T_{X}(a 1_X)-D_A(a) 1_X$ for all $a\in A$
and $D(a)=D_A(a) 1_X$ is a bounded derivation from $A$ into
$X^{(2n)}$.
Moreover, $D=d_{(F,G)}$ is inner derivation if and only if
$D_{A}=d_{F}$ and
$T_{X}=d_{G}+\delta_{F}=d_{G+1_X F}$ are inner derivations.
\end{enumerate}
\end{lemma}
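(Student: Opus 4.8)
The plan is to derive both characterizations directly from the general structure theorems, Lemma~\ref{derivodd} and Lemma~\ref{deriveven}, by exploiting two elementary facts about the identity $1_X$: first, that $1_X$ acts as a two-sided identity on every iterated dual $X^{(k)}$, the module actions on duals being the adjoints of the module actions on $X$; and second, that consequently every derivation $\delta$ into such a dual annihilates the unit, $\delta(1_X)=0$, since $\delta(1_X)=\delta(1_X)1_X+1_X\delta(1_X)=2\,\delta(1_X)$. With these in hand, the auxiliary data $D_X$ and $T_A$ appearing in the general lemmas will be shown to be \emph{determined} by $T_X$ and $D_A$, so that the derivation is recorded by the two maps $D_A$ and $T_X$ alone.

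For the odd case I start from the presentation $D(a,x)=(D_A(a)+T_A(x),D_X(a)+T_X(x))$ of Lemma~\ref{derivodd} and specialise the identities in parts (b) and (c). Since $T_X$ is a derivation, $T_X(1_X)=0$. Setting $x=1_X$ in $T_X(ax)=D_X(a)x+aT_X(x)$ and using the identity action gives $D_X(a)=T_X(a1_X)$; setting $y=1_X$ in $T_X(x)y+xT_X(y)=T_A(xy)$ gives $T_A(x)=T_X(x)1_X$. Substituting these back yields exactly the asserted form. The two side conditions come from the same substitutions: putting $x=1_X$ in $T_X(xa)=xD_X(a)+T_X(x)a$ gives $T_X(1_Xa)=D_X(a)=T_X(a1_X)$, while putting $x=1_X$ in $T_X(x)y+xT_X(y)=T_A(xy)$ gives $1_XT_X(y)=T_A(y)=T_X(y)1_X$. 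For the converse one defines $D_X(a):=T_X(a1_X)$ and $T_A(x):=T_X(x)1_X$ and checks (a)--(c) of Lemma~\ref{derivodd}. The innerness statement is read off by noting that when $T_X=d_g$ the formulas $D_X(a)=d_g(a1_X)$ and $T_A(x)=d_g(x)1_X$ collapse, via the identity action, to $d_g(a)$ and $\delta_g(x)$ respectively.

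For the even case the decisive simplification is that $T_A$ now vanishes: since $T_A$ is an $A$-module homomorphism with $T_A(xy)=0$ and $x=x\,1_X$, we get $T_A(x)=T_A(x1_X)=0$. With $T_A=0$ the last relation in part (c) of Lemma~\ref{deriveven} becomes $T_X(xy)=xT_X(y)+T_X(x)y$, so $T_X$ is a genuine derivation and hence $T_X(1_X)=0$. Setting $x=1_X$ in $T_X(ax)=D_A(a)x+D_X(a)x+aT_X(x)$ then yields $T_X(a1_X)=D_A(a)1_X+D_X(a)$, i.e. $D_X(a)=T_X(a1_X)-D_A(a)1_X$, which gives the stated second coordinate; the companion relation $T_X(xa)=xD_A(a)+xD_X(a)+T_X(x)a$ at $x=1_X$ gives $T_X(1_Xa)-1_XD_A(a)=D_X(a)=T_X(a1_X)-D_A(a)1_X$, the asserted compatibility. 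That $a\mapsto D_A(a)1_X$ is a derivation is equivalent, via $D_A(a)1_X=T_X(a1_X)-D_X(a)$ and the derivation property of $D_X$, to $a\mapsto T_X(a1_X)$ being a derivation. The innerness claim uses $T_A=0$ together with the collapse of $d_G+\delta_F$ to $d_{G+1_XF}$, obtained by rewriting $\delta_F(x)=xF-Fx$ as $x(1_XF)-(1_XF)x$ through the identity action.

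The main obstacle is not the forward substitutions, which are short, but the bookkeeping in the converse directions and in the derivation statement for $a\mapsto D_A(a)1_X$. Here one must track carefully which of the several module products land in $A^{(k)}$ and which in $X^{(k)}$, and repeatedly invoke the compatibility axioms $a(xy)=(ax)y$, $(xy)a=x(ya)$, $(xa)y=x(ay)$ together with the bimodule identity $(ax)b=a(xb)$. In particular, verifying that $(D_A(a)b)1_X$ equals $(D_A(a)1_X)b$ amounts to the interaction of the left and right $A$-actions on $1_X$, and this is exactly the content that the separate condition ``$a\mapsto D_A(a)1_X$ is a derivation'' is designed to capture. Keeping these identifications straight, rather than any deep idea, is where the care is required.
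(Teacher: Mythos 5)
Your proposal is correct and follows exactly the route the paper intends: the paper omits a written proof but frames this lemma precisely as a specialization of Lemma~\ref{derivodd} and Lemma~\ref{deriveven}, and your argument carries that out — using that $1_X$ acts as the identity on the iterated duals, that derivations annihilate $1_X$, the substitutions $x=1_X$ and $y=1_X$ to recover $D_X(a)=T_X(a1_X)$ and $T_A(x)=T_X(x)1_X$ in the odd case, and the decisive observation $T_A(x)=T_A(x1_X)=0$ in the even case. The remaining converse verifications and the innerness collapses $\delta_g$, $d_G+\delta_F=d_{G+1_XF}$ are the routine bookkeeping you describe, so no further comment is needed.
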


As a consequence of 
Proposition \ref{nweak} we give the next result
concerning to the $(n)-$weak amenability of $A\bowtie X$
in the case where $X$ is unital. This result covers \cite[Theorem 3.1]{EK1} and 
\cite[Proposition 3.11]{Javan-N} in the special case.

\begin{theorem}
Suppose that $X$ is unital.
\begin{enumerate}[\hspace{1em}\rm (1)]
\item If $A$ and $X$ are $(2n+1)-$weakly amenable then
$A\bowtie X$ is $(2n+1)-$weakly amenable.
\item If $A\bowtie X$ is $(2n+1)-$weakly amenable then $A$ is $(2n+1)-$weakly amenable
and the only derivations $T_{X}:X\to X^{(2n+1)}$ which is also an $A-$module
homomorphism and $T_{X}(x) 1_X=1_X T_{X}(x)$ are inner derivations.
\item If $A\bowtie X$ is $(2n+1)-$weakly amenable then $A$ is $(2n+1)-$weakly amenable
and $X$ is $(2n+1)-$cyclicly weak $A$-module amenable.
\item If $A$ and $X$ are $(2n)-$weakly amenable then
$A\bowtie X$ is $(2n)-$weakly amenable.\\

\end{enumerate}
\end{theorem}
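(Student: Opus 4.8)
The plan is to dispatch the four clauses separately, leaning on the preceding lemma to move derivations between $A\bowtie X$ and its two constituents, and on Proposition \ref{nweak} for the two sufficiency clauses (1) and (4). Before anything else I would record two consequences of the unitality of $X$. Evaluating the compatibility identity $(xa)y=x(ay)$ at $x=y=1_X$ and using that $1_X$ is the algebra identity gives $1_X a=a 1_X$ in $X$ for every $a\in A$. Moreover, the canonical image of $1_X$ is a two-sided identity for each Arens product, so $1_X$ acts as the identity operator on every dual $X^{(k)}$; consequently $X X^{(k)}=X^{(k)}X=X^{(k)}$ for all $k\ge 0$, and in particular $X^2=X$.

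Clauses (1) and (4) are then immediate from Proposition \ref{nweak}: the density hypotheses $\overline{X^2}=X$ and $\overline{X X^{(m-1)}}=X^{(m-1)}$ hold automatically for unital $X$ by the remarks above, so applying the proposition with $m=2n+1$ and $m=2n$ respectively, the $m$-weak amenability of both $A$ and $X$ yields the $m$-weak amenability of $A\bowtie X$.

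For clause (2) I would exercise the preceding lemma in its two degenerate cases. Taking $T_X=0$ turns any derivation $D_A\colon A\to A^{(2n+1)}$ into the derivation $D(a,x)=(D_A(a),0)$ on $A\bowtie X$; since $A\bowtie X$ is $(2n+1)$-weakly amenable, $D$ is inner, and the moreover clause of the lemma forces $D_A$ to be inner, so $A$ is $(2n+1)$-weakly amenable. Taking $D_A=0$ together with a derivation $T_X\colon X\to X^{(2n+1)}$ that is an $A$-module homomorphism with $T_X(x)1_X=1_X T_X(x)$ produces $D(a,x)=(T_X(x)1_X,\,T_X(a1_X)+T_X(x))$; here the lemma's side condition $T_X(1_X a)=T_X(a1_X)$ is free because $1_X a=a1_X$, and the remaining side condition is exactly the standing hypothesis on $T_X$. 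Hence $D$ is a derivation, so inner, and the moreover clause returns $T_X=d_g$ for some $g\in X^{(2n+1)}$, i.e. $T_X$ is inner.

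Clause (3) is clause (2) re-expressed in module-theoretic language: the derivations $T_X\colon X\to X^{(2n+1)}$ that are simultaneously $A$-module homomorphisms and obey the cyclic relation $T_X(x)1_X=1_X T_X(x)$ are precisely the cyclic $(2n+1)$-module derivations of the $A$-module $X$, and the statement that all of them are inner is by definition the $(2n+1)$-cyclic weak $A$-module amenability of $X$; thus (3) follows from (2) once the terminology is unwound. The step I expect to demand the most care is exactly this translation, together with the verification that each $D$ assembled in (2) genuinely satisfies every hypothesis of the preceding lemma — both hinge on the identity $1_X a=a1_X$, and once that is secured the remaining manipulations are routine bookkeeping.
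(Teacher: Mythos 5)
Your proof is correct and follows essentially the same route the paper intends: parts (1) and (4) fall out of Proposition \ref{nweak}, whose density hypotheses are automatic because $1_X$ acts as the identity on every $X^{(k)}$ (and $1_Xa=a1_X$ follows from $(xa)y=x(ay)$ exactly as you argue), while parts (2) and (3) come from specializing the unital-case lemma with $T_X=0$ resp.\ $D_A=0$ and invoking its ``moreover'' clause. Your reading of (3) as a definitional restatement of (2) is the right one, since the paper uses the term $(2n+1)$-cyclicly weak $A$-module amenable in precisely that sense.
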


{\bf Acknowledgments.} The authors would like to express their appreciation
to Professor H. R. Ebrahimi Vishki for  introducing them to the subject of 
this work and all of his encouragement and valuable comments 
which provided significant improvements to this article.


\end{document}